\numberwithin{equation}{section}
\def\pa{\partial}
\newcommand{\R}{\mathbb{R}}
\newcommand{\C}{\mathbb{C}}
\newcommand{\lk}{\mathcal{L}_K}
\newtheorem{theorem}{Theorem}[section]
\newtheorem{lemma}[theorem]{Lemma}
\newtheorem{corollary}[theorem]{Corollary}
\newtheorem{proposition}[theorem]{Proposition}
\theoremstyle{definition}
\newtheorem{remark}[theorem]{Remark}
\newcommand{\Extend}[5]{\ext@arrow0099{\arrowfill@#1#2#3}{#4}{#5}}
\begin{document}
\title[NLS with Coulomb potential ]{Nonlinear Schr\"odinger equation with Coulomb potential}

\author{Changxing Miao}
\address{Institute of Applied Physics and Computational Mathematics, Beijing 100088}
\email{miao\_changxing@iapcm.ac.cn}

\author{Junyong Zhang}
\address{Department of Mathematics, Beijing Institute of Technology, Beijing 100081; Department of Mathematics, Cardiff University, UK}
\email{zhang\_junyong@bit.edu.cn; ZhangJ107@cardiff.ac.uk}

\author{Jiqiang Zheng}
\address{Institute of Applied Physics and Computational Mathematics, Beijing 100088}
\email{zhengjiqiang@gmail.com}

\begin{abstract}
In this paper, we study the Cauchy problem for the nonlinear
Schr\"odinger equations with Coulomb potential $i\pa_tu+\Delta
u+\tfrac{K}{|x|}u=\lambda|u|^{p-1}u$ with $1<p\leq5$ on $\R^3$.  We mainly consider the influence of the long range potential $K|x|^{-1}$ on the existence theory and scattering theory for nonlinear Schr\"odinger equation. In particular, we prove
the global existence when the Coulomb potential is attractive, i.e. $K>0$ and scattering theory when the Coulomb potential is repulsive i.e. $K\leq0$.
The argument is based on the interaction Morawetz-type inequalities and the equivalence of Sobolev norms.

\end{abstract}

 \maketitle

\begin{center}
 \begin{minipage}{100mm}
   { \small {{\bf Key Words:}  Nonlinear Schr\"odinger equation; global well-posedness; blow-up; scattering.}
      {}
   }\\
    { \small {\bf AMS Classification:}
      {35P25,  35Q55, 47J35.}
      }
 \end{minipage}
 \end{center}

 %\tableofcontents %%

%%%%%%%%%%%%%%%%%%%%%%%%%%%%%%%%%%%%%%%%%%%%

%%%%%%%%%%%%%%%%%%%%%%%%%%%%%%%%%%%%%%%%%%%%

\section{Introduction}

\noindent

\noindent We study the initial-value problem for the
nonlinear
 Schr\"odinger equations with Coulomb potential
\begin{align} \label{equ1.1}
\begin{cases}    (i\partial_t-\mathcal{L}_K)u= \lambda f(|u|^2)u,\quad
(t,x)\in\R\times\R^3,
\\
u(0,x)=u_0(x)\in H^1(\R^3), \quad x\in\R^3,
\end{cases}
\end{align}
where $u:\R_t\times\R_x^3\to \C,\; \mathcal{L}_K=-\Delta-\frac{K}{|x|}$ with $K\in\R$, $f(|u|^2)=|u|^{p-1}$, and $\lambda\in\{\pm1\}$
with $\lambda=1$ known as the defocusing case and $\lambda=-1$
as the focusing case.

The study of the operator $\mathcal{L}_K=-\Delta-K|x|^{-1}$ with the Coulomb potential originates from both the physical and mathematical interests.
In particular, $K$ is positive,  this operator provides a quantum mechanical description of the Coulomb force between
two charged particles and corresponds to having an external attractive long-range potential due to the presence of a positively charged atomic nucleus.
We refer to the reader to \cite{Mess, Ser}  for work on these more models of the hydrogen atom in quantum physics fields.

The mathematical interest in these equations however comes from the operator theory with a long range decay potential and the dispersive behavior of the solution. Note that $|x|^{-1}\in L^2(\R^3)+L^\infty(\R^3),$ we know from \cite[Theorem X.15]{RS} that $\mathcal{L}_K$ is essentially self-adjoint on $C_0^\infty(\R^3)$ and self-adjoint on $D(-\Delta)$.  We refer the reader to \cite{RS,Taylor} for more theory of this operator. The nonlinear equation \eqref{equ1.1} and many variations aspects have been studied extensively in the literature. In particular, the existence of a unique strong global-in-time solution to \eqref{equ1.1} with Hartree nonlinearity $f(|u|^2)=|x|^{-1}\ast|u|^2$ goes back to \cite{CG}.
When $K\leq 0$, the solution $u(t)$ to \eqref{equ1.1} with the Hartree nonlinearity is studied in \cite{DF, HO} in which they proved the global existence and a decay rate for the solution; however, they need
the initial data in a weighted-$L^2$ space. When $K>0$, Lenzmann and Lewin \cite{LeLe} proved a time average estimate holds for every $R>0$
such that
\begin{equation}
\limsup_{T\to\infty}\frac1{T}\int_0^T\int_{|x|\leq R}|u(t,x)|^2 dx dt\leq 4K
\end{equation}
and
\begin{equation}
\limsup_{T\to\infty}\frac1{T}\int_0^T\int_{|x|\leq R}|\nabla u(t,x)|^2 dx dt\leq K^3
\end{equation}
which is related to the RAGE theorem (see Reed-Simon\cite{RS}).

In this paper, we will study the Cauchy problem for the nonlinear Schr\"odinger equation \eqref{equ1.1} with initial data in energy space $H^1(\R^3)$.
The Cauchy problem, including the global existence and scattering theory, for the nonlinear Schr\"odinger equation without potential, i.e. $K=0$, has been intensively studied in \cite{Cav,GV79}.
Due to the perturbation of the long range potential, many basic tools which were used to study the nonlinear Schr\"odinger equation are different even fails.
We only have
a local-in-time Strichartz estimate and global-in-time Strichartz estimate fails when $K>0$,. We therefore show the solution of \eqref{equ1.1} is global existence but does not scatter.
Fortunately, in the case $K<0$, Mizutani \cite{Miz} recently obtained the global-in-time Strichartz estimate  by employing several techniques from scattering theory such as the long time parametrix
construction of Isozaki-Kitada type \cite{IK}, propagation estimates and local decay estimates. In this repulsive case, we will establish an interaction Morawetz estimate for
the  defocusing case, which provides us
a decay of the solution $u$ to \eqref{equ1.1}. Combining this with  the global-in-time Strichartz estimate \cite{Miz}, we therefore obtain the scattering theory in the repulsive and defocusing cases. It is worth  mentioning that in the proof of scattering theory,  we also need a chain rule which is
established by proving the equivalence of the Sobolev norm
from the heat kernel estimate, as we did in \cite{KMVZZ-Sobolev,ZZ}. Even though we obtain some results for
this Cauchy problem,  the whole picture of the nonlinear Schr\"odinger equation with the Coulomb potential is far to be completed, for example,
the scattering theory in the energy-critical cases.

Equation \eqref{equ1.1} admits a number of symmetries in  $H^1(\R^3)$, explicitly:

$\bullet$ {\bf Phase invariance:} if $u(t,x)$ solves \eqref{equ1.1}, then so does $e^{i\gamma}u(t,x),~\gamma\in\R;$

$\bullet$ {\bf Time translation invariance:} if $u(t,x)$ solves \eqref{equ1.1}, then so does $u(t+t_0,x+x_0),~(t_0,x_0)\in\R\times\R^3$.

From the Ehrenfest law or direct computation, these symmetries induce invariances in the energy space, namely:
mass
\begin{equation}\label{equ:mass}
M(u)=\int_{\R^3} |u(t,x)|^2\;dx=M(u_0)
\end{equation}
and energy
\begin{equation}\label{equ:energy}
E(u)=\int_{\R^3}\Big(\frac12|\nabla u|^2-\frac{K}2\frac{|u|^2}{|x|}+\frac{\lambda}{p+1}|u|^{p+1}\Big)\;dx.
\end{equation}
Comparing with the classical Schr\"odinger equation (i.e. \eqref{equ1.1} with $K=0$), equation \eqref{equ1.1} is not {\bf space translation invariance}, which induces that  the momentum
$$P(u):={\rm Im}\int_{\R^3}\bar{u}\nabla u\;dx$$
is not conserved.  Removing the potential term $\frac{K}{|x|}u$, one recovers the classical
nonlinear Schr\"odinger equation:
\begin{align} \label{equ:nls}
\begin{cases}    (i\partial_t+\Delta)u= \lambda |u|^{p-1}u,\quad
(t,x)\in\R\times\R^3,
\\
u(0,x)=u_0(x)\in H^1(\R^3),
\end{cases}
\end{align}
which is scaling invariant. That is,
the class of solutions to \eqref{equ:nls} is left invariant by the
scaling
\begin{equation}\label{scale}
u(t,x)\mapsto \mu^{\frac2{p-1}}u(\mu^2t, \mu
x),\quad\mu>0.
\end{equation}
Moreover, one can also check that the only homogeneous $L_x^2$-based
Sobolev space that is left invariant under \eqref{scale} is
$\dot{H}_x^{s_c}(\R^3)$ with $s_c:=\tfrac{3}2-\tfrac2{p-1}$. When $s_c<1$, the problem is called energy-subcritical problem.
The problem is known as energy-critical problem when $s_c=1$. There are a number of work to study the problems, we refer the reader to \cite{Bo99a,Cav,CKSTT07,GV85,RV,Visan2007} for defocusing case in the energy-subcritical and
energy-critical cases; to \cite{Dodson4,DM1,DM2,DHR,HR,KM,KV20101} for the focusing case.   It is known that the defocusing case is different from the focusing one due to the opposite sign between the kinetic energy and potential energy.

In this paper, we mainly consider the influence of the long range potential $K|x|^{-1}$ on the existence theory and scattering theory for nonlinear Schr\"odinger equation.
We will find  some  influences, e.g. global existence, are same as the result of  \eqref{equ:nls}; but, in particular $K>0$, some results are quite different. For example,  the solution is global existence no matter what sign of $K$, but
it scatters when $K<0$ but does not scatter  when $K>0$ even in the defocusing case.

As mentioned above the focusing case is different from the defocusing case. In the focusing case $(\lambda=-1)$, we will also use the energy without potential
$$E_0(u):=\frac12\int_{\R^3}|\nabla u(t,x)|^2\;dx-\frac1{p+1}\int_{\R^3}|u(t,x)|^{p+1}\;dx,$$
to give the threshold for global/blowup dichotomy.
As the same argument as in \cite{KMVZ,LMM} considering NLS with an inverse square potential, in the case $K<0,$  we will consider  the initial data below the threshold of the ground state $Q$ to the classical elliptic equation
\begin{equation}\label{equ:ground}
  -\Delta Q+Q=Q^p,\quad 1<p<5
\end{equation}
due to the sharp constant in the Gagliardo-Nirenberg inequality
\begin{equation}\label{equ:gni}
  \|f\|_{L^{p+1}}^{p+1}\leq C_K\|f\|_{L^2}^\frac{5-p}{2}\|\sqrt{\lk}f\|_{L^2}^\frac{3(p-1)}{2}= C_K\|f\|_{L^2}^\frac{5-p}{2}\Big(\|f\|_{\dot{H}^1}^2-K\int\tfrac{|f|^2}{|x|}\;dx\Big)^\frac{3(p-1)}{4}.
\end{equation}
Let $C_0$ be the sharp constant of the classical Gagliardo-Nirenberg inequality
\begin{equation}\label{equ:clagn}
  \|f\|_{L^{p+1}}^{p+1}\leq C_0\|f\|_{L^2}^\frac{5-p}{2}\|f\|_{\dot{H}^1}^\frac{3(p-1)}{2}.
\end{equation}
Then,
we claim that $C_K=C_0$, it is well-known that equality in \eqref{equ:clagn} with $K=0$ is attained by $Q$, but we will see that equality in \eqref{equ:gni} with $K<0$ is never attained. Indeed, by the sharp Gagliardo-Nirenberg inequality for  \eqref{equ:clagn}, we find
$$\lim_{n\to\infty}\frac{ \|Q\|_{L^{p+1}}^{p+1}}{\|Q\|_{L^2}^\frac{5-p}{2}\Big(\|Q\|_{\dot{H}^1}^2-K\int\tfrac{|Q|^2}{|x-n|}\;dx\Big)^\frac{3(p-1)}{4}}=\frac{ \|Q\|_{L^{p+1}}^{p+1}}{\|Q\|_{L^2}^\frac{5-p}{2}\|Q\|_{\dot{H}^1}^\frac{3(p-1)}{2}}=C_0.$$
Thus, $C_0\leq C_K.$ However, for any $f\in H^1\setminus\{0\}$ and $K<0$,
 the standard Gagliardo-Nirenberg inequality implies
$$\|f\|_{L^{p+1}}^{p+1}\leq C_0\|f\|_{L^2}^\frac{5-p}{2}\|f\|_{\dot{H}^1}^\frac{3(p-1)}{2}< C_0\|f\|_{L^2}^\frac{5-p}{2}\|\sqrt{\lk}f\|_{L^2}^\frac{3(p-1)}{2}.$$
Thus $C_K = C_0$, and the last estimate also shows that
equality is never attained.

In the energy-critical case ($s_c=1$), we consider
 the ground state $W$ to be the elliptic equation
$$-\Delta W=W^5$$
due to the sharp constant in Sobolev embedding.
We refer to \cite{BL,GNN,Kw}  about the existence and uniqueness of the ground state.\vspace{0.2cm}

Now, we state our main results. First, we consider the global well-posedness theory for the problem \eqref{equ1.1} under some restrictions.
In the energy-subcritical case (i.e $p-1<4$), the global well-posedness will follow from local well-posedness theory and uniform kinetic energy
control
\begin{equation}\label{equ:uniforkinetic1231}
  \sup_{t\in I}\|u(t)\|_{\dot{H}^1(\R^3)}\leq C(E(u_0),M(u_0)),
\end{equation}
And the local well-posedness will be proved by the standard fixed point argument combining with Strichartz estimate on Lorentz space.

In the energy-critical case ($p-1=4$),  we will show the global well-posedness
by controlling global kinetic energy \eqref{equ:uniforkinetic1231} and  proving ``good local
well-posedness". More precisely, using perturbation argument as in Zhang \cite{Zhang} and global well-posedness for equation \eqref{equ:nls} under some restrictions, we will show that there exists a
small constant $T=T(\|u_{0}\|_{H^{1}_{x}})$ such that \eqref{equ1.1}
is well-posed on $[0,T]$, which is so-called ``good local
well-posed". On the other hand, since the equation in \eqref{equ1.1}
is time translation invariant, this ``good local well-posed"
combining with the global kinetic energy control \eqref{equ:uniforkinetic1231} gives immediately
the global well-posedness. We remark that this argument also works for the energy-subcritical case.

\begin{theorem}[Global well-posedness]\label{thm:global} Let $K\in\R$ and $u_0\in H^1(\R^3)$. Suppose that
$0<p-1\leq4$ in the defocusing case $\lambda=1$. While for the focusing case $\lambda=-1$, we assume that $0<p-1<\frac43$ $($mass-subcritical$)$ or
\begin{itemize}
\item  If $p-1=\frac43$ $($mass-critical$)$ , assume $M(u_0)<M(Q)$.

\item If $\frac43<p-1<4$ and $K<0$, assume \footnote{For $K<0$ and $\lambda=-1$, we remark that under the assumption $M(u_0)^{1-s_c}E(u_0)^{s_c}<M(Q)^{1-s_c}E_0(Q)^{s_c}$, the condition
$\|u_0\|_{L^2}^{1-s_c}\|u_0\|_{\dot{H}^1}^{s_c}<\|Q\|_{L^2}^{1-s_c}\|Q\|_{\dot{H}^1}^{s_c}$
is equivalent to
$\|u_0\|_{L^2}^{1-s_c}\Big(\|u_0\|_{\dot{H}^1}^2-K\big\||x|^{-\frac12}u_0\big\|_{L^2}^2\Big)^{\frac{s_c}2}<\|Q\|_{L^2}^{1-s_c}\|Q\|_{\dot{H}^1}^{s_c}.$
See Remark \ref{rem:tde}.}
 \begin{equation}\label{equ:mecond}
   M(u_0)^{1-s_c}E(u_0)^{s_c}<M(Q)^{1-s_c}E_0(Q)^{s_c},\; \|u_0\|_{L^2}^{1-s_c}\|u_0\|_{\dot{H}^1}^{s_c}<\|Q\|_{L^2}^{1-s_c}\|Q\|_{\dot{H}^1}^{s_c}.
 \end{equation}

\item  If $p-1=4$ $($energy-critical$)$   and $K<0$, assume that $u_0$ is radial\footnote{Here the restriction $K<0$ induces us to utilize the result of Kenig-Merle \cite{KM} in which one needs a radial initial data. } and
 \begin{equation}\label{equ:energythre}
   E(u_0)<E_0(W),\; \|u_0\|_{\dot{H}^1}<\|W\|_{\dot{H}^1}.
 \end{equation}
\end{itemize}
Then, there exists a unique global solution $u(t,x)$ to \eqref{equ1.1} such that
\begin{equation}\label{equ:ugolbaunif}
  \|u\|_{L_t^q(I,H^{1,r})}\leq C(\|u_0\|_{H^1},|I|),
\end{equation}
for any $I\subset\R$ compact and $(q, r)\in\Lambda_0$ admissible defined below.
\end{theorem}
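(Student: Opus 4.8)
The plan is to reduce global existence to the uniform kinetic-energy bound \eqref{equ:uniforkinetic1231} through a standard local theory, and then to establish \eqref{equ:uniforkinetic1231} case by case from the conservation laws \eqref{equ:mass}--\eqref{equ:energy}. First I would set up local well-posedness by a contraction mapping on the Duhamel formulation $u(t)=e^{-it\mathcal{L}_K}u_0-i\lambda\int_0^t e^{-i(t-s)\mathcal{L}_K}|u|^{p-1}u\,ds$, running the fixed point in Strichartz spaces built on the \emph{local-in-time} Strichartz estimate for $e^{-it\mathcal{L}_K}$, refined to Lorentz spaces so as to cover the full range $0<p-1\le4$. To place the nonlinearity at the $H^1$ level one needs $\|\langle\mathcal{L}_K\rangle^{1/2}f\|_{L^2}\approx\|\langle\nabla\rangle f\|_{L^2}$, i.e. the equivalence of Sobolev norms together with a fractional product/chain rule. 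In the strictly subcritical range $0<p-1<4$ this produces an existence time depending only on $\|u_0\|_{H^1}$, hence a blow-up alternative: the solution is global once $\sup_t\|u(t)\|_{H^1}<\infty$. Since $M(u)$ is conserved, everything reduces to \eqref{equ:uniforkinetic1231}; the borderline $p-1=4$ needs the refined local theory treated below.

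For the potential I would always treat $-\tfrac{K}2\int|u|^2/|x|\,dx$ as a lower-order term: the fractional Hardy inequality $\int|u|^2/|x|\,dx\lesssim\|u\|_{\dot H^{1/2}}^2\lesssim\|u\|_{L^2}\|u\|_{\dot H^1}$ followed by Young's inequality absorbs it into $\tfrac12\|\nabla u\|_{L^2}^2$ whenever $K>0$. With this in hand the defocusing case is immediate, since energy conservation then bounds $\|\nabla u\|_{L^2}^2$ by $E(u_0)$ and $M(u_0)$. In the focusing mass-subcritical case $(p-1<\tfrac43)$ the Gagliardo--Nirenberg exponent $\tfrac{3(p-1)}2<2$ lets Young's inequality absorb the wrong-sign nonlinear term into the kinetic energy with no smallness assumption. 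At the mass-critical power $p-1=\tfrac43$ the exponent equals $2$, so the nonlinear term is absorbed precisely when a power of $M(u_0)/M(Q)$ is $<1$, which is exactly the hypothesis $M(u_0)<M(Q)$ once $C_0$ is evaluated through the Pohozaev identities for $Q$.

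The delicate case is the focusing intercritical range $\tfrac43<p-1<4$ with $K<0$. Here I would work with the Coulomb kinetic energy $y(t):=\|\sqrt{\mathcal{L}_K}u(t)\|_{L^2}^2=\|\nabla u\|_{L^2}^2-K\int|u|^2/|x|\,dx$, which for $K<0$ dominates $\|\nabla u(t)\|_{L^2}^2$, so a bound on $y$ yields \eqref{equ:uniforkinetic1231}. Conservation of energy gives $E(u_0)=\tfrac12y(t)-\tfrac1{p+1}\|u\|_{L^{p+1}}^{p+1}$, and the sharp Coulomb--Gagliardo--Nirenberg inequality \eqref{equ:gni} with the crucial identity $C_K=C_0$ yields $E(u_0)\ge g(y(t))$, where $g(y):=\tfrac12y-\tfrac{C_0}{p+1}M^{(5-p)/4}y^{3(p-1)/4}$. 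Since $\tfrac{3(p-1)}4>1$ in this range, $g$ rises then falls; because $C_K=C_0$, the function $g$ is \emph{identical} to the one governing the classical equation \eqref{equ:nls}, so its maximum, written in scale-invariant form, is exactly $M(Q)^{1-s_c}E_0(Q)^{s_c}$. The first inequality in \eqref{equ:mecond} thus places $E(u_0)$ strictly below the barrier, while the second --- recast via the footnote's equivalence as a condition on $y(0)$ --- places $u_0$ on the small-$y$ side; continuity of $t\mapsto y(t)$ and conservation of $M,E$ then run the Holmer--Roudenko type continuity argument \cite{HR,DHR}, trapping $y(t)$ below the barrier for all time.

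For the energy-critical power $p-1=4$ I would first establish \eqref{equ:uniforkinetic1231} by the analogous threshold analysis against $W$, replacing Gagliardo--Nirenberg by the sharp Sobolev inequality and $Q$ by $W$: for $K<0$ one has $\|\nabla u\|_{L^2}\le\|\sqrt{\mathcal{L}_K}u\|_{L^2}$ and the sharp Sobolev constant is unchanged, so \eqref{equ:energythre} traps $\|\nabla u(t)\|_{L^2}$ strictly below $\|W\|_{\dot H^1}$ for all time. Because the energy-critical local theory is not $H^1$-subcritical, global existence does not follow from a blow-up alternative; instead I would prove ``good local well-posedness'' --- existence on $[0,T]$ with $T=T(\|u_0\|_{H^1})$ --- by treating $K|x|^{-1}u$ as a perturbation of the free energy-critical equation \eqref{equ:nls} and importing its sub-threshold global theory: the defocusing result of \cite{CKSTT07,Visan2007} and, in the focusing radial case, Kenig--Merle \cite{KM} (which forces the radiality and $K<0$ hypotheses), following the scheme of \cite{Zhang}. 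Since the trapping keeps $u(t)$ below the free threshold at every time, good local well-posedness applies with a uniform step, and time-translation invariance together with the uniform kinetic-energy bound iterates it to a global solution. Finally, inserting the global $H^1$ bound into the local-in-time Strichartz estimates on any compact $I$ gives \eqref{equ:ugolbaunif}. The main obstacle throughout is the kinetic-energy bound in the focusing intercritical and energy-critical cases, where the sign-indefinite potential must be reconciled with the sharp-constant threshold analysis; it is precisely the invariance of the sharp constants (i.e. $C_K=C_0$) and the bound $\|\nabla u\|_{L^2}\le\|\sqrt{\mathcal{L}_K}u\|_{L^2}$ for $K<0$ that make the classical thresholds $Q$ and $W$ the correct barriers.
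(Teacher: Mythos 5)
Most of your outline coincides with the paper's actual proof: the reduction to the uniform kinetic--energy bound \eqref{equ:uniforkinetic1231}, the defocusing and mass-(sub)critical cases via Hardy/Gagliardo--Nirenberg absorption with the threshold $M(u_0)<M(Q)$, and the energy-critical case via ``good local well-posedness'' obtained by perturbing off the global free solution of \eqref{aequ2} (CKSTT in the defocusing case, Kenig--Merle in the radial focusing case) following Zhang, iterated by time-translation invariance --- this is exactly the paper's scheme. In the intercritical focusing case your variant differs mildly but harmlessly: you run the continuity/trapping argument on $y(t)=\|\sqrt{\lk}u\|_{L^2}^2$ using $C_K=C_0$, whereas the paper drops the sign-definite potential term (for $K<0$) and traps $\|u(t)\|_{L^2}^{1-s_c}\|u(t)\|_{\dot H^1}^{s_c}$ directly below the $Q$-threshold via \eqref{equ:gnineq}--\eqref{equ:k0negt}; the equivalence of the two formulations of the initial condition is precisely Remark \ref{rem:tde}, so both routes are sound.

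The genuine gap is your local well-posedness setup in the attractive case $K>0$, which the theorem covers (defocusing, mass-subcritical, and mass-critical cases all allow $K\in\R$). You propose running the $H^1$ contraction on the propagator $e^{-it\lk}$, placing the nonlinearity at the $H^1$ level via the equivalence $\|\langle\lk\rangle^{1/2}f\|\simeq\|\langle\nabla\rangle f\|$ together with a fractional product rule. But these tools (Lemmas \ref{L:Leibnitz}, \ref{lem:heat}, \ref{lem:sobnorequ}) are established only for $K<0$: the Gaussian heat kernel bound \eqref{equ:heatk} fails for the attractive Coulomb potential, and for $K>2$ the operator $1+\lk$ is not even nonnegative --- the paper itself exhibits $\lk e^{-K|x|/2}=-\tfrac{K^2}{4}e^{-K|x|/2}$ in Subsection \ref{sub:stricharz}, so $\inf\operatorname{spec}\lk=-\tfrac{K^2}{4}$ and $\sqrt{1+\lk}$ is undefined; repairing this would require shifting by a $K$-dependent constant and redeveloping the $L^p$ theory for an operator with bound states, which is nontrivial and nowhere available. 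The paper avoids the issue entirely: in Theorem \ref{thm:local} the contraction is run with the \emph{free} propagator $e^{it\Delta}$, with $\tfrac{K}{|x|}u$ placed into the Duhamel term and estimated by Lorentz-space H\"older (Proposition \ref{Lorentz}) using $|x|^{-1}\in L^{3,\infty}(\R^3)$, producing the smallness factor $T^{1/2}$; only the classical $\langle\nabla\rangle$-norms appear, and the argument is valid for every $K\in\R$. Since you already adopt exactly this perturbative treatment of the potential in your energy-critical step, the repair is consistent with the rest of your outline --- but as written, your subcritical local theory does not cover $K>0$, and the $\lk$-based Sobolev machinery should be reserved (as in the paper) for the scattering analysis where $K<0$.
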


\begin{remark}The global existence is almost completed in the defocusing case regardless of whether in the repulsive or attractive case.
The focusing case is more complicated and the following blow up result below is a supplement of this global existence.
\end{remark}
Next, for the global solution $u$ to equation \eqref{equ1.1}, we  want to study the long-time behavior of the solution, such as scattering theory. We say that a global solution $u$ to \eqref{equ1.1}  \emph{scatters},
if there exist $u_\pm\in H_x^1(\R^3)$ such that
\[
\lim_{t\to\pm\infty} \| u(t) - e^{-it\lk}u_{\pm} \|_{H_x^1(\R^3)} =
0.
\]

From the argument as in the proof of well-posedness theory, we know that one can regard the long-range potential term $\frac{K}{|x|}u$ as the nonlinear perturbation term(it looks like the cubic nonlinear term $|u|^2u$ from scaling analysis). However, by Reed-Simon\cite{RS}, we know that the limits
$$s-\lim_{t\to\pm\infty}e^{it\lk}e^{it\Delta}\quad \text{in}\quad L^2(\R^3)$$
do not exist. Therefore,  we can not regard  the potential term $\frac{K}{|x|}u$ as the nonlinear perturbation in the scattering theory. We refer the reader to several different constructions of wave operators in the long-range case, such as momentum approach\cite{Hor}, Isozaki-Kitada method\cite{IK} and position approach \cite{DG,Ya}.

On the other hand, the standard arguments show that the scattering is equivalent to the global Strichartz-norm boundedness ($\|u(t)\|_{L_t^q(\R;L_x^r(\R^3))}<+\infty)$ provided that we have the global-in time Strichartz estimate. However, in the attractive case, i.e. $K>0$, the global-in-time Strichartz estimate does not hold, see Subsection \ref{sub:stricharz} below. Thus,
we don't know whether the solution $u$ to \eqref{equ1.1} with $K>0$ scatters or not even for the small initial data. While for the  repulsive case, i.e $K<0$, the global-in-time Strichartz estimates were recently established by  Mizutani \cite{Miz}. Then, combining with  Sobolev norm equivalence \eqref{equ:sobequi123} below, one can easily obtain the scattering result for the small initial data. For the general initial data, we will get  the scattering result in the defocusing energy-subcritical case ($\lambda=1,~p<5$) by establishing the interaction Morawetz estimate, which gives a  global Strichartz-norm boundedness.

In the case $K>0$, we know from \cite[Lemma 6]{BJ} that there is a positive solution $f(x)\in H^2$ of the elliptic equation
\begin{equation}\label{equ:by}
  -\Delta f-\frac{K}{|x|}f+f+f^p=0.
\end{equation}
This implies that there is a soliton $u(t,x):=e^{it}f(x)$ solves \eqref{equ1.1} with $\lambda=1$.
We remark that such soliton is global but not scatters.
Equation \eqref{equ:by} arises in the Thomas-Fermi-von Weizsacker (TFW) theory of
atoms and molecules \cite{BBL,Lieb} without electronic repulsion. There, $K|x|^{-1}$ is the
electric potential due to a fixed nucleus of atomic number $K$ located at the origin,
$f(x)^2$ stands for the electronic density and $\int f(x)^2dx$ is the total number of
electrons.

While for the case $K\leq0$, we will derive the quadratic Morawetz indentity for \eqref{equ1.1} and then
establish the following interaction Morawetz estimate for $\lambda=1$
\begin{equation}\label{equ:intmorest}
  \int_{\R}\int_{\R^3}|u(t,x)|^4\;dx\;dt\leq CM(u_0)\sup_{t\in\R}\|u(t,\cdot)\|_{\dot{H}^\frac12}^2,
\end{equation}
which provides us
a decay of the solution $u$ to \eqref{equ1.1}.
Combining this with Strichartz estimate and
Leibniz rule obtained by the following Sobolev norm equivalence
\begin{equation}\label{equ:sobequi123}
  \big\|\sqrt{1+\lk}f\big\|_{L^p(\R^3)}\simeq \big\| \sqrt{1-\Delta}f\big\|_{L^p(\R^3)},\quad 1<p<3,
\end{equation}
we establish the scattering theory as follows.
\begin{theorem}[Scattering theory]\label{thm:scattering}
Let $K\leq0,\; \frac43<p-1<4,\;\lambda=1$ and $u_0\in H^1(\R^3)$. Then, there exists a global solution $u$ to \eqref{equ1.1}, and the solution
$u$ scatters in the sense that there exists $u_{\pm}\in H^1(\R^3)$ such that
\begin{equation}\label{equ:uscat}
  \lim_{t\to\pm\infty}\big\|u(t,\cdot)-e^{-it\lk}u_{\pm}\big\|_{H^1(\R^3)}=0.
\end{equation}
\end{theorem}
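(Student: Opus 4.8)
The plan is to combine the three tools granted above—the global-in-time Strichartz estimates of Mizutani (valid since $K\le 0$), the interaction Morawetz estimate \eqref{equ:intmorest}, and the Sobolev-norm equivalence \eqref{equ:sobequi123}—into the standard defocusing scattering scheme. First I would record that, since $\la=1$ and $0<p-1<4$, Theorem \ref{thm:global} already furnishes a unique global solution $u$ with
\[
\sup_{t\in\R}\|u(t)\|_{H^1(\R^3)}\le C(E(u_0),M(u_0)),
\]
by mass and energy conservation. In particular $\sup_{t}\|u(t)\|_{\dot H^{1/2}}^2$ is finite, so the right-hand side of \eqref{equ:intmorest} is controlled by the conserved quantities and we obtain the global spacetime bound $\|u\|_{L^4_{t,x}(\R\times\R^3)}\le C(M(u_0),E(u_0))<\infty$. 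The remaining task is to upgrade this to global finiteness of a scattering-critical Strichartz norm and then to extract the asymptotic states.

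For the upgrade I would argue by a finite decomposition bootstrap. Using the finiteness of $\|u\|_{L^4_{t,x}(\R)}$, partition $\R$ into finitely many intervals $I_1,\dots,I_N$ on which $\|u\|_{L^4_{t,x}(I_j\times\R^3)}\le\eta$ for a small threshold $\eta$ to be fixed. On a single $I_j$ I would set up the Duhamel/Strichartz inequality at the $H^1$ level, measuring $u$ in the $\langle\lk\rangle^{1/2}$-weighted admissible norms and estimating the source term $\la|u|^{p-1}u$ in the dual Strichartz space. Here the Coulomb potential obstructs any direct use of a fractional Leibniz rule, because $\lk$ is neither translation invariant nor commuting with $\nabla$; the device is to invoke \eqref{equ:sobequi123} to replace $\|\langle\lk\rangle^{1/2}(\cdot)\|_{L^r}$ by the flat norm $\|\langle\nabla\rangle(\cdot)\|_{L^r}$ (legitimate for the Lebesgue exponents $1<r<3$), apply the usual fractional product and chain rules on the flat side, and transfer back. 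Feeding in the $L^4_{t,x}$-smallness on $I_j$ together with the uniform $H^1$ bound (via interpolation and the embedding $H^1\hookrightarrow L^6$) makes the nonlinear term a small multiple of the Strichartz norm of $u$, closing a continuity argument and yielding a bound on $I_j$ depending only on $\|u(t_j)\|_{H^1}$. Summing the finitely many intervals gives a global bound $\|u\|_{Z(\R)}<\infty$ for the chosen scattering norm $Z$.

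Once the global Strichartz bound is in hand, scattering follows in the customary way. Writing $u(t)=e^{-it\lk}u_0-i\la\int_0^t e^{-i(t-s)\lk}(|u|^{p-1}u)(s)\,ds$ and setting
\[
u_{\pm}:=u_0-i\la\int_0^{\pm\infty}e^{is\lk}\big(|u|^{p-1}u\big)(s)\,ds,
\]
I would show that the integral converges absolutely in $H^1(\R^3)$ and that $\|u(t)-e^{-it\lk}u_{\pm}\|_{H^1}\to0$ as $t\to\pm\infty$. Both assertions reduce, via the global-in-time Strichartz estimate of Mizutani and the same nonlinear estimate used in the bootstrap (again routed through \eqref{equ:sobequi123}), to the statement that the dual-Strichartz norm of $|u|^{p-1}u$ over $\{|t|>T\}$ tends to $0$ as $T\to\infty$, which holds because the global scattering norm $\|u\|_{Z(\R)}$ is finite.

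The main obstacle I expect is precisely the nonlinear estimate under the non-commuting operator $\lk$: one must realize a Leibniz/chain rule for $\langle\lk\rangle^{1/2}$ applied to $|u|^{p-1}u$, and the only available route is through the Sobolev-norm equivalence \eqref{equ:sobequi123}, whose validity is restricted to the Lebesgue exponents $1<r<3$. Choosing admissible Strichartz pairs whose spatial exponents, and their duals, all fall within this window, while simultaneously covering the entire intercritical range $\frac43<p-1<4$ and retaining enough smallness from the $L^4_{t,x}$ Morawetz output to close the bootstrap, is the delicate point; everything else is the standard defocusing scattering machinery.
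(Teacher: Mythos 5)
Your proposal is correct and follows essentially the same route as the paper: uniform $H^1$ bounds plus the interaction Morawetz estimate give the global $L^4_{t,x}$ decay, the time axis is split into finitely many intervals of small $L^4_{t,x}$ norm, and on each interval the Strichartz bootstrap is closed by transferring derivatives through the equivalence \eqref{equ:sobequi123} (the paper enforces exactly your exponent constraint by restricting the Strichartz family $S^0$ to spatial exponents $r\in[2,3_-]$, then splitting the interpolation into the cases $p\in(\frac73,4]$ and $p\in(4,5)$), after which the asymptotic states are extracted by the standard Cauchy-in-$H^1$ tail argument for $e^{it\lk}u(t)$. The only detail you leave implicit—the precise admissible pairs and interpolation exponents covering the full range $\frac43<p-1<4$—is carried out in the paper exactly along the lines you anticipate, so there is no substantive gap.
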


%\begin{remark}
%
%
%$(ii)$ By the same argument as above, one can obtain the similar result as Theorem \ref{thm:global} and Theorem \ref{thm:scattering} for the Laplacian operator perturbed by general potential
% $-\Delta+K|x|^{-\alpha}$ with $\alpha\in(0,2),$ see Remark \ref{rem:intmor} below.\vspace{0.2cm}
%
% $(iii)$ We will consider the energy-critical ($p=5$) problem and the solution below ground state in a future work.
%\end{remark}

In the focusing case, i.e $\lambda=-1$, by the classical Virial argument, one can obtain the blow-up result for the negative energy.
\begin{theorem}[Blow-up result]\label{thm:blowup}
Let $K\in\R,\; \frac43< p-1\leq4,\;\lambda=-1$.

$(i)$
Let $u_0\in \Sigma:=\{u_0\in H^1,\; xu_0\in L^2\}$.   Then, the solution $u$ to \eqref{equ1.1} blows up in both time direction, in one of the three cases:
\begin{enumerate}
  \item $C(E(u_0),M(u_0))<0$;

\item $C(E(u_0),M(u_0))=0,\; y'(0)<0$;

\item $C(E(u_0),M(u_0))>0,\; y'(0)^2\geq 24(p-1) C(E(u_0),M(u_0))\big\||x|u_0\big\|_{L^2(\R^3)}^2$;
\end{enumerate}
where
$$y'(0)=4{\rm Im}\int_{\R^3}x\cdot\nabla u_0\bar{u}_0\;dx,$$
and
\begin{equation}\label{equ:cem}
  C(E(u_0),M(u_0)):= \begin{cases}
  E(u_0) \quad\text{if}\quad K\leq0\\
  E(u_0)+\frac{3K^2}{2(3p-7)(p-1)}M(u_0)\quad\text{if}\quad K>0.
  \end{cases}
\end{equation}

$(ii)$ Let $u_0\in H^1(\R^3)$ be radial, and
%\begin{equation}\label{equ:cemh1}
%  C(E(u_0),M(u_0)):= \begin{cases}
%  6(p-1)E(u_0) \quad\text{if}\quad K\leq0\\
%  6(p-1)E(u_0)+\frac{9K^2}{3p-7}M(u_0)\quad\text{if}\quad K>0.
%  \end{cases}
%\end{equation}
assume that $C(E(u_0),M(u_0))<0.$ Then, the solution $u$ to \eqref{equ1.1} blows up in both time direction.

\end{theorem}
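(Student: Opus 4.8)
The plan is to run a (localized) virial / Glassey-type argument built on the weighted quantity
\[
y(t)=\int_{\R^3}|x|^2\,|u(t,x)|^2\,dx .
\]
For part $(i)$, the assumption $u_0\in\Sigma$ guarantees $y(0)<\infty$, and a standard regularization/approximation argument together with the local theory shows $y\in C^2$ on the maximal interval of existence, with $y'(t)=4\,\mathrm{Im}\int_{\R^3}x\cdot\nabla u\,\bar u\,dx$, which matches the stated $y'(0)$. Differentiating once more and using the equation in \eqref{equ1.1}, the potential $V=-K|x|^{-1}$ contributes $-4\int x\cdot\nabla V\,|u|^2=-4K\int |u|^2/|x|$, so that
\[
y''(t)=8\,\|\nabla u\|_{L^2}^2-4K\int_{\R^3}\frac{|u|^2}{|x|}\,dx-\frac{12(p-1)}{p+1}\int_{\R^3}|u|^{p+1}\,dx .
\]

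The next step is to insert the conservation of energy \eqref{equ:energy}, eliminating the nonlinear integral, to obtain
\[
y''(t)=12(p-1)E(u_0)-2(3p-7)\|\nabla u\|_{L^2}^2+(6p-10)\,K\int_{\R^3}\frac{|u|^2}{|x|}\,dx ,
\]
where $3p-7>0$ and $6p-10>0$ precisely because $p-1>\tfrac43$. When $K\le0$ both correction terms are non-positive, so $y''(t)\le12(p-1)E(u_0)=12(p-1)\,C(E(u_0),M(u_0))$ at once. When $K>0$ the Coulomb term has the unfavorable sign, and I would absorb it into the good negative gradient term: using Hardy's inequality $\int|u|^2/|x|\lesssim\|\nabla u\|_{L^2}\|u\|_{L^2}$ followed by Young's inequality trades $(6p-10)K\int|u|^2/|x|$ against $2(3p-7)\|\nabla u\|_{L^2}^2$ at the cost of a multiple of $K^2M(u_0)$; choosing the Young parameter to cancel the gradient term is what accounts for the mass correction $\tfrac{3K^2}{2(3p-7)(p-1)}M(u_0)$ in $C(E,M)$, giving again $y''(t)\le12(p-1)\,C(E(u_0),M(u_0))$.

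With the uniform bound $y''(t)\le12(p-1)C_0$, where $C_0:=C(E(u_0),M(u_0))$, the conclusion of part $(i)$ follows from an elementary convexity argument: integrating twice yields $y(t)\le 6(p-1)C_0\,t^2+y'(0)\,t+y(0)$, and since $y\ge0$ the bounding parabola cannot remain non-negative under any of the three hypotheses. If $C_0<0$ it tends to $-\infty$; if $C_0=0$ and $y'(0)<0$ then $y'$ is non-increasing and negative; and if $C_0>0$ the discriminant condition $y'(0)^2\ge24(p-1)C_0\,\||x|u_0\|_{L^2}^2$ is exactly what forces the parabola to reach zero in finite time. In each case $y$ would vanish in finite time, which is incompatible with a global $H^1$ solution (the standard lower bound then forces $\|\nabla u(t)\|_{L^2}\to\infty$), so the solution blows up; time-reversal symmetry of \eqref{equ1.1} gives the statement in both time directions.

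For part $(ii)$ the data need not lie in $\Sigma$, so $y(t)$ may be infinite and must be replaced by a truncated radial weight $\phi_R(x)=R^2\phi(x/R)$ with $\phi(x)=|x|^2$ near the origin, bounded at infinity, and $\phi''\le2$. Setting $y_R(t)=\int\phi_R|u|^2$, the virial computation reproduces the identity above on $\{|x|\le R\}$ up to error terms supported in $\{|x|\gtrsim R\}$ arising from $\nabla\phi_R$, $\Delta^2\phi_R$, the nonlinearity, and the long-range Coulomb contribution $\int\nabla\phi_R\cdot\nabla V\,|u|^2$. I would control these errors using the radial Sobolev (Strauss) estimate $|u(x)|\lesssim|x|^{-1}\|u\|_{H^1}$ together with the conserved mass and energy, so that they are $o_R(1)$ as $R\to\infty$; for $C_0<0$ this gives $y_R''(t)\le 6(p-1)C_0<0$ once $R$ is large, again contradicting $y_R\ge0$. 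The main obstacle is precisely this truncation step: the cutoff destroys the exact Pohozaev cancellation, and the delicate point is to show that every error term — in particular the Coulomb piece on $|x|\sim R$ — is genuinely negligible uniformly in time, which is what restricts part $(ii)$ to the radial setting and to the single regime $C_0<0$.
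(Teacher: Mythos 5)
Your proposal is correct and follows essentially the same route as the paper: for part $(i)$, the virial identity with weight $|x|^2$, elimination of the $L^{p+1}$ term via the conserved energy, absorption of the Coulomb term for $K>0$ through Hardy plus Young as in \eqref{equ:kpos} with the parameter tuned to cancel the $-2(3p-7)\|\nabla u\|_{L^2}^2$ term, and the convexity argument on $y(t)$; for part $(ii)$, the truncated radial weight $\phi_R$ together with the radial Sobolev inequality, exactly as in the paper's Case 2. One small arithmetic point: after substituting the energy, the Coulomb coefficient is $(6p-8)K$, not your $(6p-10)K$ (the paper itself prints $6K$, which agrees only at $p=7/3$); since the argument uses only the positivity of this coefficient for $p>7/3$, the mechanism is unaffected, though the exact mass constant in $C(E(u_0),M(u_0))$ depends on which coefficient one carries through the Young step. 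Also, in part $(ii)$ the tail errors are not literally $o_R(1)$ uniformly in time, since the radial Sobolev bound leaves a factor $\|\nabla u\|_{L^2}^{(p-1)/2}$ that is not a priori bounded; as in the paper's derivation of \eqref{equ:vrt}, this piece must be absorbed into the coercive $-2(3p-7)\|\nabla u\|_{L^2}^2$ term (possible since $\tfrac{p-1}{2}\leq 2$), after which your conclusion $\partial_t^2 V_R\leq 6(p-1)\,C(E(u_0),M(u_0))<0$ and the convexity contradiction go through as you describe.
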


The paper is organized as follows. In Section $2$,  as a
preliminaries, we give some notation, recall the Strichartz
estimate and prove the Sobolev space equivalence. Section $3$ is
devoted to proving global well-posedness, i.e Theorem \ref{thm:global}.  We show the interaction Morawetz-type estimates in Section $4$, and we utilize such Morawetz-type estimates and the equivalence of
Sobolev norm to prove Theorem \ref{thm:scattering}. Finally, we use the Virial argument to obtain the blow-up result (Theorem \ref{thm:blowup}) in Section 5.

\subsection*{Acknowledgements} The authors were supported by NSFC Grants 11771041, 11831004.    We are grateful to R. Killip, J. Murphy and M. Visan for useful discussions.

%%%%%%%%%%%%%%%%%%%%%%%%%%%%%%%%%%%%%%%%%%%%%%%%%%%%%%%%%%%%%%%%%%%%%%%%%%%%%%%%%%%%%%%%%%%%%%%%%%%%%%%%%%%%%%%%%%%%%%%%%%%%%%%%%%%%%%%%%%%%%%

%%%%%%%%%%%%%%%%%%%%%%%%%%%%%%%%%%%     section 2 Preliminaries                           %%%%%%%%%%%%%%%%%%%%%%%%%%%%%%%%%%%%%%%%%%%%%%%%%%%%

%%%%%%%%%%%%%%%%%%%%%%%%%%%%%%%%%%%%%%%%%%%%%%%%%%%%%%%%%%%%%%%%%%%%%%%%%%%%%%%%%%%%%%%%%%%%%%%%%%%%%%%%%%%%%%%%%%%%%%%%%%%%%%%%%%%%%%%%%%%%%%

\section{Preliminaries}

In this section, we first introduce some notation, and then recall the Strichartz estimates.
 We conclude this section by showing the Sobolev space equivalence between the operator $\lk$ and
Laplacian operator $-\Delta$.

\subsection{Notations}
First, we give some notations which will be used throughout this
paper. To simplify the expression of our inequalities, we introduce
some symbols $\lesssim, \thicksim, \ll$. If $X, Y$ are nonnegative
quantities, we use $X\lesssim Y $ or $X=O(Y)$ to denote the estimate
$X\leq CY$ for some $C$, and $X \thicksim Y$ to denote the estimate
$X\lesssim Y\lesssim X$.   We
denote $a_{\pm}$ to be any quantity of the form $a\pm\epsilon$ for
any $\epsilon>0$.

For a spacetime slab $I\times\R^3$, we write $L_t^q L_x^r(I\times\R^3)$ for the Banach space of functions $u:I\times\R^3\to\C$ equipped with the norm
    $$\|u\|_{L_t^q(I;L_x^r(\R^3))}:=\bigg(\int_I \|u(t,\cdot)\|_{L_x^r(\R^3)}\bigg)^{1/q},$$
with the usual adjustments when $q$ or $r$ is infinity. When $q=r$, we abbreviate $L_t^qL_x^q=L_{t,x}^q$.
We will also often abbreviate $\|f\|_{L_x^r(\R^3)}$ to $\|f\|_{L_x^r}.$ For $1\leq r\leq\infty$,
we use $r'$ to denote the dual exponent to $r$, i.e. the solution to $\tfrac{1}{r}+\tfrac{1}{r'}=1.$

The Fourier transform on $\mathbb{R}^3$ is defined by
\begin{equation*}
\aligned \widehat{f}(\xi):= \big( 2\pi
\big)^{-\frac{3}{2}}\int_{\mathbb{R}^3}e^{- ix\cdot \xi}f(x)dx ,
\endaligned
\end{equation*}
giving rise to the fractional differentiation operators
$|\nabla|^{s}$ and $\langle\nabla\rangle^s$,  defined by
\begin{equation*}
\aligned
\widehat{|\nabla|^sf}(\xi):=|\xi|^s\hat{f}(\xi),~~\widehat{\langle\nabla\rangle^sf}(\xi):=\langle\xi\rangle^s\hat{f}(\xi),
\endaligned
\end{equation*} where $\langle\xi\rangle:=1+|\xi|$.
This helps us to define the homogeneous and inhomogeneous Sobolev
norms
$$\|u\|_{\dot{W}^{s,p}(\R^3)}=\big\||\nabla|^su\big\|_{L^p},\; \|u\|_{{W}^{s,p}(\R^3)}=\big\|\langle\nabla\rangle^su\big\|_{L^p}.$$
Especially, for $p=2$, we denote $\dot{W}^{s,p}(\R^3)=\dot{H}^s(\R^3)$ and ${W}^{s,p}(\R^3)=H^s(\R^3).$

Next, we recall the well-known Lorentz space and some properties of this space for our purpose.
Given a measurable function $f: \R^3\to \C$, define the distribution function of $f$ as
$$f_\ast(t)=\mu(\{x\in \R^3: |f(x)|> t\}),\quad t>0$$ and its rearrangement function as $$f^*(s)=\inf\{t: f_\ast(t)\leq s\}.$$ For $1\leq p<\infty$ and $1\leq r\leq \infty$, define the Lorentz quasi-norm
\begin{equation*}
\|f\|_{L^{p,r}(\R^3)}=\begin{cases}\Big(\int_0^\infty(s^{\frac1p}f^*(s))^r\frac{ds}{s}\Big)^{1/r}, &\quad 1\leq r<\infty;\\
\sup\limits_{s>0} s^{\frac1p}f^*(s),&\qquad r=\infty.
\end{cases}
\end{equation*}
The Lorentz space  $L^{p,r}(\R^3)$ denotes the space of complex-valued measurable functions $f$ on $\R^3$ such that its quasi-norm $\|f\|_{L^{p,r}(\R^3)}$ is finite.
From this characterization, $L^{p,\infty}(\R^3)$ is the usual weak $L^p$ space, $L^{p,p}(\R^3)=L^p(\R^3)$ and $L^{p,r}(\R^3)\subset L^{p,\tilde{r}}(\R^3)$ with $r<\tilde{r}$.

We refer to O'Neil \cite{Neil} for the following H\"older inequality in Lorentz space.
\begin{proposition}[H\"older's inequality in Lorentz space]\label{Lorentz} Let $1\leq p, p_0, p_1<\infty$ and $1\leq r, r_0, r_1\leq \infty$, then
\begin{equation}
\|fg\|_{L^{p,r}}\leq C \|f\|_{L^{p_0,r_0}} \|g\|_{L^{p_1,r_1}}, \quad \frac1p=\frac{1}{p_0}+\frac{1}{p_1}, ~~\frac1r=\frac{1}{r_0}+\frac{1}{r_1}.
\end{equation}
\end{proposition}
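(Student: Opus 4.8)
The plan is to reduce the inequality to a pointwise submultiplicativity property of the decreasing rearrangement and then invoke the ordinary Hölder inequality on the measure space $\big((0,\infty),\tfrac{ds}{s}\big)$.

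First I would record the key rearrangement estimate
$$(fg)^*(s+t)\leq f^*(s)\,g^*(t),\qquad s,t>0,$$
whose proof is purely set-theoretic: if $|f(x)|\leq f^*(s)$ and $|g(x)|\leq g^*(t)$ then $|f(x)g(x)|\leq f^*(s)g^*(t)$, so that $\{|fg|>f^*(s)g^*(t)\}\subseteq\{|f|>f^*(s)\}\cup\{|g|>g^*(t)\}$. Taking $\mu$ of both sides and using $\mu(\{|f|>f^*(s)\})=f_\ast(f^*(s))\leq s$ (and likewise for $g$) gives $(fg)_\ast(f^*(s)g^*(t))\leq s+t$, which by the definition of $f^*$ is exactly the claimed bound. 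Choosing $t=s$ yields the working form $(fg)^*(2s)\leq f^*(s)g^*(s)$.

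Next, assuming first that $r,r_0,r_1<\infty$, I would substitute this into the defining quasi-norm. After the change of variables $s\mapsto 2s$,
$$\|fg\|_{L^{p,r}}^r\leq 2^{r/p}\int_0^\infty\big(s^{1/p}f^*(s)\,g^*(s)\big)^r\,\frac{ds}{s}.$$
The scaling hypothesis $\tfrac1p=\tfrac1{p_0}+\tfrac1{p_1}$ lets me factor $s^{1/p}=s^{1/p_0}s^{1/p_1}$ and regroup the integrand as $\big(s^{1/p_0}f^*(s)\big)^r\big(s^{1/p_1}g^*(s)\big)^r$. Since $\tfrac1r=\tfrac1{r_0}+\tfrac1{r_1}$, the exponents $\tfrac{r_0}{r}$ and $\tfrac{r_1}{r}$ are Hölder conjugates, so applying the classical Hölder inequality with respect to $\tfrac{ds}{s}$ produces precisely $\|f\|_{L^{p_0,r_0}}^r\,\|g\|_{L^{p_1,r_1}}^r$; taking $r$-th roots gives the result with $C=2^{1/p}$.

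Finally I would dispatch the endpoint cases in which one or more of $r,r_0,r_1$ equals $\infty$ by replacing the relevant integrals with suprema in the definition of $L^{p,\infty}$; here the rearrangement estimate alone suffices and no integral Hölder step is needed. The only genuinely delicate point is the rearrangement inequality together with the careful bookkeeping of the two exponent relations; once these are in place the argument is routine, and the loss of the harmless constant $2^{1/p}$ is consistent with the stated inequality carrying an unspecified constant $C$.
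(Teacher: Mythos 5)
Your proof is correct. Note that the paper does not prove this proposition at all---it simply cites O'Neil \cite{Neil}---so there is no in-paper argument to compare against; what you have written is essentially the classical argument underlying that citation. Your key lemma $(fg)^*(s+t)\leq f^*(s)\,g^*(t)$ is exactly O'Neil's rearrangement inequality, and your derivation of it is sound: the set inclusion $\{|fg|>f^*(s)g^*(t)\}\subseteq\{|f|>f^*(s)\}\cup\{|g|>g^*(t)\}$ is correct, and the step $f_\ast(f^*(s))\leq s$ is legitimate (it relies on the right-continuity of the distribution function, which you could flag explicitly, since the infimum defining $f^*$ is attained precisely for this reason). The diagonal specialization $(fg)^*(2s)\leq f^*(s)g^*(s)$, the change of variables costing $2^{1/p}$, the factorization $s^{1/p}=s^{1/p_0}s^{1/p_1}$, and the scalar H\"older step with conjugate exponents $r_0/r$, $r_1/r$ on $\bigl((0,\infty),\tfrac{ds}{s}\bigr)$ all check out, as does your handling of the endpoint cases (when $r<\infty$ but, say, $r_1=\infty$, pulling the supremum $s^{1/p_1}g^*(s)\leq\|g\|_{L^{p_1,\infty}}$ out of the integral is the degenerate H\"older step, and equality $\tfrac1r=\tfrac1{r_0}+\tfrac1{r_1}$ forces $r=r_0$ there, so the bookkeeping closes). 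One small remark: O'Neil's theorem is slightly more general, requiring only $\tfrac1r\leq\tfrac1{r_0}+\tfrac1{r_1}$; your equality case in fact implies that version via the nesting $L^{p,r}\subset L^{p,\tilde r}$ for $r<\tilde r$ recorded in the paper, so nothing is lost.
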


\subsection{Strichartz estimate}\label{sub:stricharz} It is well known that the Strichartz estimate is very useful in the study of the
nonlinear dispersive equations. To state the result, we  define
\begin{equation}
\Lambda_0=\big\{(q,r):\;\tfrac2q=3\big(\tfrac12-\tfrac1r\big), q,r\geq2\big\}.
\end{equation}

\begin{theorem}[Local-in-time Strichartz estimate]\label{thm:locastri} Let $K\in\R$ and $\lk$ be as above.
For $(q,r)\in\Lambda_0$, there holds
\begin{equation}\label{equ:locainstr}
\|e^{it\mathcal{L}_K}f\|_{L_t^q(I,L_x^r)}\leq C(|I|)\|f\|_{L_x^2}.
\end{equation}

\end{theorem}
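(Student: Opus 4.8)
The plan is to treat $\lk$ as a perturbation of the free Laplacian on short time scales, where the Coulomb term is small, and then to propagate the resulting bound across the finite interval $I$. Writing $v(t)=e^{it\lk}f$, the linear flow solves $i\pa_t v+\Delta v=-K|x|^{-1}v$, so Duhamel's formula against the free propagator gives
\[
v(t)=e^{it\Delta}f+iK\int_0^t e^{i(t-s)\Delta}\frac{v(s)}{|x|}\,ds.
\]
I would estimate the two terms by the free Strichartz estimates: the homogeneous term $e^{it\Delta}f$ is controlled by $\|f\|_{L^2}$ for every $(q,r)\in\Lambda_0$, while the retarded integral is handled by the inhomogeneous (dual) Strichartz estimate, the truncation to $\int_0^t$ being legitimate by the Christ--Kiselev lemma since the relevant time exponents are non-dual.

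The crux is to control the source term $K|x|^{-1}v$ in a dual Strichartz norm. Since $|x|^{-1}\in L^{3,\infty}(\R^3)$ but $|x|^{-1}\notin L^3$, ordinary H\"older fails and I would invoke the Lorentz H\"older inequality of Proposition~\ref{Lorentz}, working throughout with the Lorentz-refined Strichartz spaces $L^q_tL^{r,2}_x$ (available for $q>2$ from the $TT^*$/real-interpolation argument). The natural pair is $(q,r)=(4,3)\in\Lambda_0$: at fixed time, Proposition~\ref{Lorentz} yields $\big\||x|^{-1}v\big\|_{L^{3/2,2}_x}\lesssim\big\||x|^{-1}\big\|_{L^{3,\infty}}\|v\|_{L^{3,2}_x}$, matching the dual exponents $r'=3/2$, $q'=4/3$. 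Since $4/3<4$, H\"older in time on the finite interval costs only a factor $|I|^{1/2}$, giving
\[
\big\|K|x|^{-1}v\big\|_{L^{4/3}_t L^{3/2,2}_x(I)}\lesssim |K|\,|I|^{1/2}\,\|v\|_{L^4_t L^{3,2}_x(I)}.
\]

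Combining these, on any subinterval with $|I|$ small enough that $C|K||I|^{1/2}\le\tfrac12$ I can absorb the Duhamel contribution into the left-hand side and obtain $\|v\|_{L^4_tL^{3,2}_x}\lesssim\|f\|_{L^2}$; since $L^{3,2}\hookrightarrow L^3$, this recovers the standard pair $(4,3)$. Feeding this bound back into the off-diagonal inhomogeneous Strichartz estimate then produces the estimate for every admissible $(q,r)\in\Lambda_0$, and partitioning a general compact $I$ into finitely many such short intervals and concatenating yields the constant $C(|I|)$. The main obstacle is precisely the singularity of the Coulomb potential: because $|x|^{-1}$ only lies in weak $L^3$, the argument genuinely requires the Lorentz-space H\"older inequality together with the Lorentz-refined Strichartz estimates, and the smallness that closes the contraction comes solely from the short time interval---which is why the estimate is local in time and holds for every $K\in\R$, whereas the global-in-time version fails for $K>0$.
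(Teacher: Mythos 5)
Your proposal is correct, and it shares the paper's skeleton --- the same Duhamel expansion against the free propagator, the same use of Lorentz-refined Strichartz estimates (Lemma~\ref{lem:locinstr}) and of O'Neil's H\"older inequality (Proposition~\ref{Lorentz}) with $|x|^{-1}\in L^{3,\infty}(\R^3)$ --- but it closes the estimate differently. The paper places the Duhamel source in the dual norm $L_t^2L_x^{6/5,2}$ associated with the endpoint pair $(2,6)$ and pairs $|x|^{-1}\in L^{3,\infty}$ with $u\in L_t^\infty L_x^2$; since $\|u\|_{L_t^\infty L_x^2}=\|f\|_{L^2}$ by mass conservation (unitarity of $e^{it\lk}$), the right-hand side is controlled by $|K|\,|I|^{1/2}\|f\|_{L^2}$ in one shot, with no smallness condition, no absorption, and no partition of $I$. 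You instead put the solution itself in the Strichartz norm $L_t^4L_x^{3,2}$, gain $|I|^{1/2}$ from H\"older in time, and close by absorbing for $|I|$ small, then concatenate subintervals. Both routes work, but yours carries a technical burden the paper's avoids: the absorption step presupposes that $\|v\|_{L_t^4L_x^{3,2}(I)}$ is finite, which is exactly the quantity being estimated, so to be rigorous you must (as your mention of a contraction hints) first construct the solution of the integral equation by fixed point in $C_tL^2\cap L_t^4L_x^{3,2}$ on short intervals and identify it with $e^{it\lk}f$ by $L^2$ uniqueness --- make that explicit. Your iteration also produces a constant growing with the number of subintervals, i.e.\ with $|K|^2|I|$, which is harmless here, and your final upgrade to all $(q,r)\in\Lambda_0$ by feeding the bound back into the inhomogeneous estimate is the same step the paper performs directly. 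In short: a correct proof, marginally heavier than the paper's because it does not exploit mass conservation in $L_t^\infty L_x^2$, which is what lets the paper dispense with smallness altogether.
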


%
%\begin{remark}
%We can compare the local in time Strichartz estimate for the Schrodinger equation with harmonic potential in the confining potential. But they can show the global well-posedness for energy-critical case. Thus, one can consider
%the energy-critical in $\dot{H}^\frac12\cap\dot{H}^1$.
%
%
%\end{remark}

\begin{proof} The proof is based on a perturbation argument.  Let $u(t,x)=e^{it\mathcal{L}_K}f$, then $u$ satisfies that
$$i\pa_tu+\Delta u=-\frac{K}{|x|}u,\;u(0,x)=f(x)$$
We regard the Coulomb potential as an inhomogeneous term, hence we have by Duhamel's formula
$$e^{it\mathcal{L}_K}f=u(t)=e^{it\Delta}f+iK\int_0^te^{i(t-s)\Delta}\frac{u}{|x|}\;dx.$$
For our purpose, we recall the inhomogeneous Strichartz estimate without potential on Lorentz space.
\begin{lemma}[Strichartz estimate for $e^{it\Delta}$, \cite{KT,Plan}]\label{lem:locinstr}
For $(q,r),(q_1,r_1)\in \Lambda_0$, we have
\begin{equation}\label{equ:edstrlor}
\begin{split}
\|e^{it\Delta}f\|_{L_t^q(I,L_x^r)}&\leq C\|f\|_{L_x^2};\\
\Big\|\int_0^t e^{i(t-s)\Delta}F(s)\;ds\Big\|_{L_t^q(I,L_x^{r,2})}&\leq C\|F(t,x)\|_{L_t^{q_1'}(I,L_x^{r_1',2})},
\end{split}
\end{equation}
where $\frac1q+\frac{1}{q'}=1$.

\end{lemma}
Using the above lemma, we show that
\begin{align*}
\|e^{it\lk}f\|_{L_t^q(I,L_x^{r})}\leq C\|f\|_{L^2(\R^3)}+|K|\Big\|\int_0^te^{i(t-s)\Delta}\frac{u}{|x|}\;dx\Big\|_{L_t^q(I,L_x^{r})}.
\end{align*}
We use the above inhomogeneous Strichartz estimate to obtain
\begin{align*}
&\Big\|\int_0^te^{i(t-s)\Delta}\frac{u}{|x|}\;dx\Big\|_{L_t^q(I,L_x^{r})}\leq
\Big\|\int_0^te^{i(t-s)\Delta}\frac{u}{|x|}\;dx\Big\|_{L_t^q(I,L_x^{r,2})}\\\leq &
C\Big\|\frac{u}{|x|}\Big\|_{L_t^2(I,L_x^{\frac{6}{5},2})}
\leq C|I|^\frac12\||x|^{-1}\|_{L_x^{3,\infty}}\|u\|_{L_t^\infty L_x^2}\\
\leq&C(|I|)\|f\|_{L_x^2}
\end{align*}
where we use the mass conservation in the last inequality. Therefore we prove \eqref{equ:locainstr}.
\end{proof}

It is nature to ask whether the global-in-time Strichartz estimate holds or not. The answer is that
the global-in-time Strichartz estimate does not hold in the attractive case $K>0$ but holds
in the repulsive case $K\leq 0$.\vspace{0.2cm}

To see the attractive case, a simple computation shows
$$\Delta(e^{-c|x|})=c^2e^{-c|x|}-\frac{2}{|x|}ce^{-c|x|}.$$
Let $c_K=K/2$, this implies
$$\lk(e^{-c_K|x|})=\Big(-\Delta-\frac{K}{|x|}\Big)(e^{-c_K|x|})=-\Big(\frac{K}{2}\Big)^2e^{-c_K|x|}.$$
Then, the function $u(t,x)=e^{ic_K^2t}(e^{-c_K|x|})$ with $c_K=\frac{K}{2}$ solves the linear equation $i\pa_tu-\mathcal{L}_Ku=0$ and $u_0(x)=e^{-c_K|x|}\in L^2(\R^3)$ when $K>0$. However,
\begin{equation}
\|u(t,x)\|_{L_t^q(\R,L_x^r(\R^3))}=+\infty.
\end{equation}

In the repulsive Coulomb potential case, Mizutani \cite{Miz} recently proved the global-in-time Strichartz estimate, where the proof employs several techniques from linear scattering theory such as the long time parametrix
construction of Isozaki-Kitada type \cite{IK}, propagation estimates and local decay estimates.
\begin{theorem}[Global-in-time Strichartz estimate,\cite{Miz}]\label{thm:globalstri}
For $(q,r),(q_1,r_1)\in\Lambda_0$ and $K<0$, there holds
\begin{equation}\label{equ:globainstr}
\|e^{it\mathcal{L}_K}f\|_{L_t^q(\R,L_x^r)}\leq C\|f\|_{L_x^2},
\end{equation}
and
\begin{equation}\label{equ:globainstrinh}
\Big\|\int_0^t e^{i(t-s)\mathcal{L}_K}F(s)\;ds\Big\|_{L_t^q(\R,L_x^r)}\leq C\|F\|_{L_t^{q_1'}(\R,L_x^{r_1'})}.
\end{equation}
\end{theorem}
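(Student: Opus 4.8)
\section*{Proof proposal for Theorem \ref{thm:globalstri}}

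The plan is to deduce both bounds from a global-in-time decomposition of $e^{it\lk}$ into a piece that behaves, near spatial infinity, like the free propagator $e^{it\Delta}$ (for which the global Strichartz estimates hold, cf. Lemma~\ref{lem:locinstr}) together with a piece concentrated near the origin that is absorbed by a global local-smoothing estimate. Since $\lk$ is self-adjoint, $e^{it\lk}$ is unitary on $L^2(\R^3)$, so by the Keel--Tao theorem the homogeneous estimate \eqref{equ:globainstr} reduces to controlling the propagator after a suitable phase-space decomposition. The inhomogeneous estimate \eqref{equ:globainstrinh} then follows from \eqref{equ:globainstr}, its dual, and the Christ--Kiselev lemma for the non-sharp admissible pairs, while the double endpoint $q=q_1=2$ is handled by the bilinear form of the Keel--Tao argument fed by the local-decay bounds below. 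Thus I would concentrate on proving \eqref{equ:globainstr}.

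First I would exploit the repulsivity of the potential. For $K<0$ the potential $-K|x|^{-1}$ is nonnegative, so $\lk=-\Delta-K|x|^{-1}\geq0$ has purely absolutely continuous spectrum equal to $[0,\infty)$, with no negative or embedded eigenvalues. Introducing the generator of dilations $A=\tfrac12\big(x\cdot(-i\nabla)+(-i\nabla)\cdot x\big)$, one computes $x\cdot\nabla(-K|x|^{-1})=K|x|^{-1}<0$, the repulsivity condition, so that the commutator
$$i[\lk,A]=-2\Delta-K|x|^{-1}=\lk-\Delta\geq\lk\geq0$$
is strictly positive on every spectral window in $(0,\infty)$. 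This global Mourre estimate, combined with the limiting absorption principle and uniform resolvent bounds, yields the global-in-time local-smoothing estimate
$$\big\||\nabla|^{1/2}\langle x\rangle^{-\frac12-\eps}e^{it\lk}f\big\|_{L_t^2L_x^2(\R\times\R^3)}\lesssim\|f\|_{L^2}.$$
The delicate input here is the low-energy regime: one must establish uniform resolvent estimates down to the bottom of the spectrum and the absence of a zero-energy resonance/eigenvalue for the exactly long-range Coulomb operator.

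Next I would implement the Isozaki--Kitada parametrix to treat the part of the evolution escaping to spatial infinity. Choosing pseudodifferential cutoffs $\chi^{\pm}(x,-i\nabla)$ supported in the outgoing region $\{x\cdot\xi>0\}$ (for $\chi^{+}$) and incoming region $\{x\cdot\xi<0\}$ (for $\chi^{-}$) and away from $x=0$, I would build modified wave operators $J^{\pm}$ as Fourier integral operators with phase $\varphi^{\pm}(x,\xi)$ solving the eikonal equation $|\nabla_x\varphi^{\pm}|^2-K|x|^{-1}=|\xi|^2$ in those regions. The long-range correction to the free phase $x\cdot\xi$ is exactly what compensates for the scaling-critical $|x|^{-1}$ decay of the Coulomb potential, and is the reason a direct perturbative treatment off $e^{it\Delta}$ fails. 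After verifying $L^2$-boundedness of $J^{\pm}$ and $(J^{\pm})^{*}$, one proves the intertwining relation $e^{it\lk}J^{\pm}=J^{\pm}e^{it\Delta}+(\text{error})$, with the error controlled in the local-smoothing norm, so that the Strichartz estimate for the free-like term $J^{\pm}e^{it\Delta}(J^{\pm})^{*}$ follows from the global free estimate of Lemma~\ref{lem:locinstr}.

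Finally I would assemble the pieces: decompose $e^{it\lk}f$ into the outgoing part (for $t>0$), the incoming part (for $t<0$), and a remainder concentrated near the origin. The first two are dispatched by the parametrix and the free global Strichartz estimate, while the remainder, together with all parametrix errors, is absorbed using the global local-smoothing bound and the local-in-time estimate \eqref{equ:locainstr} summed over unit time intervals against the decay it supplies. I expect the principal obstacle to be the low-energy analysis of the second paragraph, namely the uniform resolvent estimates and the exclusion of a threshold resonance for the exactly long-range Coulomb potential, since the critical decay $|x|^{-1}$ sits precisely at the borderline where the standard Mourre and limiting-absorption machinery demands potential-specific refinements.
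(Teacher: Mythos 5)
The paper does not prove this theorem itself: it quotes it from Mizutani \cite{Miz}, describing that proof as resting on a long-time parametrix construction of Isozaki--Kitada type, propagation estimates and local decay estimates, and your outline follows exactly that architecture (repulsivity giving a global positive commutator with the dilation generator and hence global-in-time local smoothing, an Isozaki--Kitada Fourier integral parametrix with eikonal-corrected phase on outgoing/incoming regions, and the remainder absorbed via the local decay bounds). So your proposal is essentially the same approach as the cited proof; the steps you explicitly defer (uniform low-energy resolvent estimates and the threshold analysis at the borderline $|x|^{-1}$ decay) are precisely the potential-specific content carried out in \cite{Miz}.
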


\subsection{Fractional product rule} As mentioned in the introduction, we need the following fractional chain rule in the proof of scattering theory when $K<0$.
The $L^p$-product rule for fractional derivatives in Euclidean spaces
\begin{align*}
\| (-\Delta)^{\frac s2}(fg)\|_{L^p(\R^3)} \lesssim &\| (-\Delta)^{\frac s2} f\|_{L^{p_1}(\R^3)}\|g\|_{L^{p_2}(\R^3)}\\
&+\|f\|_{L^{q_1}(\R^3)}\| (-\Delta)^{\frac s2} g\|_{L^{q_2}(\R^3)},
\end{align*}
was first proved by Christ and Weinstein \cite{ChW}. Here $1<p, p_1, p_2, q_1, q_2< \infty$, $s\geq0$ and $\frac1p=\frac1{p_1}+\frac1{p_2}=\frac1{q_1}+\frac1{q_2}$.
Similarly, we have
the following for the operator $\lk$ with $K<0$.

\begin{lemma}[Fractional product rule]\label{L:Leibnitz}
Fix $K<0$ and let $\lk$ be as above.  Then for all $f, g\in C_c^{\infty}(\R^3\setminus\{0\})$ we have
\begin{align*}
\| \sqrt{1+\lk}(fg)\|_{L^p(\R^3)} \lesssim \| \sqrt{1+\lk} f\|_{L^{p_1}(\R^3)}\|g\|_{L^{p_2}(\R^3)}+\|f\|_{L^{q_1}(\R^3)}\| \sqrt{1+\lk} g\|_{L^{q_2}(\R^3)},
\end{align*}
for any exponents satisfying $1< p, p_1, q_2< 3$, $1<p_2, q_1 <\infty$ and $\frac1p=\frac1{p_1}+\frac1{p_2}=\frac1{q_1}+\frac1{q_2}$.
\end{lemma}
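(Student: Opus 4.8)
The plan is to deduce the fractional product rule for $\sqrt{1+\lk}$ from the corresponding Euclidean product rule for $(-\Delta)^{s/2}$ (with $s=1$) via the Sobolev norm equivalence \eqref{equ:sobequi123}. The key observation is that for any admissible exponent $\rho$ with $1<\rho<3$, the equivalence gives $\|\sqrt{1+\lk}h\|_{L^\rho}\simeq\|\sqrt{1-\Delta}h\|_{L^\rho}$. Hence, if I can prove the product rule with $\sqrt{1+\lk}$ replaced everywhere by $\langle\nabla\rangle=\sqrt{1-\Delta}$, then replacing each norm on the right-hand side using the equivalence in the favorable direction and the left-hand side in the other direction yields the claimed inequality, provided all the exponents in play fall in the range $(1,3)$ where the equivalence holds. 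The constraints $1<p,p_1,q_2<3$ are exactly what is needed so that the equivalence may be applied to the left-hand factor $\sqrt{1+\lk}(fg)$ (exponent $p$) and to the two derivative-carrying factors $\sqrt{1+\lk}f$ (exponent $p_1$) and $\sqrt{1+\lk}g$ (exponent $q_2$); the non-derivative factors sit in $L^{p_2}$ and $L^{q_1}$ where no equivalence is needed.

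So concretely, first I would establish the inhomogeneous product rule for the flat operator, namely
\begin{align*}
\|\langle\nabla\rangle(fg)\|_{L^p}\lesssim \|\langle\nabla\rangle f\|_{L^{p_1}}\|g\|_{L^{p_2}}+\|f\|_{L^{q_1}}\|\langle\nabla\rangle g\|_{L^{q_2}}.
\end{align*}
This follows by splitting $\langle\nabla\rangle\sim 1+(-\Delta)^{1/2}$: the contribution of the identity is handled by H\"older's inequality, $\|fg\|_{L^p}\le\|f\|_{L^{p_1}}\|g\|_{L^{p_2}}$, while the contribution of $(-\Delta)^{1/2}$ is exactly the Christ--Weinstein estimate with $s=1$ quoted in the excerpt. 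Then I would invoke \eqref{equ:sobequi123} three times: once to bound $\|\sqrt{1+\lk}(fg)\|_{L^p}$ from above by $\|\langle\nabla\rangle(fg)\|_{L^p}$ (legal since $1<p<3$), and once each to bound $\|\langle\nabla\rangle f\|_{L^{p_1}}\lesssim\|\sqrt{1+\lk}f\|_{L^{p_1}}$ and $\|\langle\nabla\rangle g\|_{L^{q_2}}\lesssim\|\sqrt{1+\lk}g\|_{L^{q_2}}$ (legal since $1<p_1,q_2<3$). Chaining these gives the assertion.

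The main obstacle is making sure the Sobolev norm equivalence \eqref{equ:sobequi123} is genuinely available and applied within its stated range $1<\rho<3$, and that the hypotheses $f,g\in C_c^\infty(\R^3\setminus\{0\})$ guarantee all quantities are finite so the formal chaining is rigorous. Because the test functions are compactly supported away from the origin, both $(-\Delta)^{s/2}$ and $\sqrt{1+\lk}$ act on them in a well-defined way and the equivalence, which I assume has been proved earlier in Section~2 from heat-kernel estimates for $\lk$, transfers cleanly. The restriction $K<0$ enters only through that equivalence, so no further positivity argument is needed here; the repulsive sign is what made the two-sided bound \eqref{equ:sobequi123} hold in the first place. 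Once the range conditions are checked, the remaining steps are routine applications of H\"older and the quoted Euclidean rule.
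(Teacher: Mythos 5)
Your proposal is correct and matches the paper's approach: the paper likewise derives the lemma as a consequence of the Sobolev norm equivalence of Lemma \ref{lem:sobnorequ} (applied at $s=1$, with exactly the exponents $p,p_1,q_2\in(1,3)$ carrying the derivative), combined with the flat-space product rule. The only cosmetic difference is that the paper states the equivalence in the gradient form $\|f\|_{L^p}+\|\nabla f\|_{L^p}\sim\|\sqrt{1+\lk}f\|_{L^p}$, so the Euclidean step reduces to the elementary Leibniz rule plus H\"older rather than the Christ--Weinstein estimate you invoke for $\sqrt{1-\Delta}$; both are legitimate.
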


This is a consequence of the equivalence of Sobolev norm
$$
\big\| f\big\|_{L^p(\R^3)}+\big\|\nabla f\big\|_{L^p(\R^3)} \sim \big\| \sqrt{1+\lk} f\big\|_{L^p(\R^3)},\quad   1< p < 3.
$$
which will be proved in the next subsection.

\subsection{Sobolev space equivalence}

In this subsection, we study the relationship between  Sobolev space adapted with Laplacian operator perturbed by Coulomb potential and classical Laplacian operator, that is,
for suitable $s$ and $p$ such that

\begin{equation}\label{equ:soalarge}
  \big\|\langle \lk\rangle^\frac{s}{2} f\big\|_{L^p(\R^3)}\simeq \big\|\langle\nabla\rangle^\frac{s}{2} f\big\|_{L^p(\R^3)}
\end{equation}
where $\langle a\rangle=(1+|a|^2)^{1/2}$. To this end, we recall the heat kernel estimate

\begin{lemma}[Heat kernel]\label{lem:heat} Let $K<0$ and let $\lk$ be as above. Then there exist constants $C,c>0$ such that
\begin{equation}\label{equ:heatk}
0\leq  e^{-t\mathcal{L}_K}(x,y)\leq Ct^{-3/2}e^{-\frac{|x-y|^2}{ct}}.
\end{equation}

\end{lemma}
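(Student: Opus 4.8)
The plan is to exploit the sign of the potential. When $K<0$ the operator factors as $\lk=-\Delta-K|x|^{-1}=-\Delta+V$ with $V(x)=|K|/|x|\geq 0$, so $\lk$ is a Schr\"odinger operator with a \emph{nonnegative} potential. For such operators the heat semigroup is dominated pointwise by the free heat semigroup $e^{t\Delta}$, whose kernel is exactly the Gaussian $p_t(x,y)=(4\pi t)^{-3/2}e^{-|x-y|^2/(4t)}$. Thus I expect to obtain \eqref{equ:heatk} with the explicit constants $C=(4\pi)^{-3/2}$ and $c=4$, independently of the value of $K$ beyond its sign.

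First I would record that the Coulomb potential $|x|^{-1}$ lies in the Kato class $K_3$ on $\R^3$; it is in fact the textbook example, the defining limit $\lim_{r\to0}\sup_x\int_{|x-y|<r}|y|^{-1}|x-y|^{-1}\,dy=0$ being elementary. This guarantees that $\lk$ generates a positivity-preserving semigroup with a jointly continuous integral kernel admitting the Feynman--Kac representation
$$e^{-t\lk}(x,y)=p_t(x,y)\,\mathbb{E}_{x,y}^{t}\Big[\exp\Big(-\int_0^t V(\omega_s)\,ds\Big)\Big],$$
where $\mathbb{E}_{x,y}^{t}$ is the expectation over the normalized Brownian bridge from $x$ to $y$ in time $t$. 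The decisive step is then the sign observation: since $V\geq0$ we have $\int_0^t V(\omega_s)\,ds\geq0$ along every path, so the exponential weight lies in $[0,1]$, and taking expectations yields at once
$$0\leq e^{-t\lk}(x,y)\leq p_t(x,y)=(4\pi t)^{-3/2}e^{-|x-y|^2/(4t)},$$
which is exactly \eqref{equ:heatk}. The lower bound $e^{-t\lk}(x,y)\geq0$ is contained in the same formula and simply records that $e^{-t\lk}$ is positivity-preserving.

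The only real obstacle is the justification of the Feynman--Kac formula and the continuity of the kernel for the singular potential $V=|K|/|x|$; this is precisely where Kato-class membership enters, and it is entirely classical. Should one prefer to avoid probability, the identical conclusion follows from the Trotter product formula $e^{-t\lk}=\lim_{n\to\infty}\big(e^{(t/n)\Delta}e^{-(t/n)V}\big)^n$ in the strong operator topology: each factor $e^{(t/n)\Delta}$ has the strictly positive Gaussian kernel, while $e^{-(t/n)V}$ is multiplication by a function taking values in $[0,1]$, so every finite product has kernel lying pointwise between $0$ and the kernel of $e^{t\Delta}$; passing to the limit preserves both inequalities. Either route delivers the stated two-sided bound, and because the comparison is against the \emph{free} Gaussian the constants are sharp and do not degenerate as $K\to0^-$.
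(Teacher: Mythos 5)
Your proof is correct, and it takes a more self-contained route than the paper does. The paper's argument is essentially a citation: having observed, exactly as you do, that $K<0$ turns $\mathcal{L}_K$ into a Schr\"odinger operator $-\Delta+V$ with nonnegative potential $V=-K|x|^{-1}\in L^2_{\mathrm{loc}}$, the authors invoke Kurata's heat-kernel bounds for uniformly elliptic operators with nonnegative potentials (alternatively Shen's estimates on the fundamental solution of $\mathcal{L}_K+\lambda$) to conclude \eqref{equ:heatk}. You instead prove the domination $0\leq e^{-t\mathcal{L}_K}(x,y)\leq e^{t\Delta}(x,y)$ directly, either by Feynman--Kac with the weight $\exp(-\int_0^t V(\omega_s)\,ds)\in[0,1]$, or by the Trotter product formula where each factor $e^{-(t/n)V}$ is a multiplication by a function in $[0,1]$. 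Both of your routes are classical and sound; the Kato-class membership of $|x|^{-1}$ in dimension three is indeed elementary and justifies the Feynman--Kac representation and kernel continuity, while the Trotter route needs only the remark (which you should make explicit if writing this up) that strong-operator convergence transfers the two-sided kernel bound by pairing against nonnegative $f,g\in L^2$ and concluding the bound almost everywhere. What your argument buys over the paper's is twofold: it avoids the black-box references, and it yields the sharp explicit constants $C=(4\pi)^{-3/2}$, $c=4$, uniform in $K<0$ and nondegenerate as $K\to0^-$, whereas the cited results only give some $C,c>0$ --- which is all the lemma claims and all the subsequent Sobolev-equivalence argument (via Sikora--Wright and Stein interpolation) actually uses.
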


\begin{proof} Since $K<0$, then $\lk=-\Delta+V(x)$ with a positive positive $V=-K|x|^{-1}$. It is easy to verify that $V\in L^2_{\text{loc}}(\R^3)$.
It is well known that \eqref{equ:heatk}, e.g.  see \cite{Kurata}. Indeed, one can use the estimate of the fundamental solution of the elliptic operator $\lk+\lambda$ with non-negative parameter $\lambda$
in Shen \cite{Shen} to obtain the heat kernel estimate.

\end{proof}

\begin{lemma}[Sobolev norm equivalence]\label{lem:sobnorequ}
Let $K<0$, $1<p<3$ and $0\leq s\leq2.$ There holds
\begin{equation}\label{equ:sobequi}
  \big\|(1+\lk)^\frac{s}2f\big\|_{L^p(\R^3)}\simeq \big\| (1-\Delta)^\frac{s}2f\big\|_{L^p(\R^3)}.
\end{equation}
\end{lemma}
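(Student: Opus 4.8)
The plan is to establish the two inequalities in \eqref{equ:sobequi} separately, reducing the whole range $0\le s\le2$ to the single endpoint $s=2$ (the case $s=0$ being trivial) by complex interpolation. Write $H:=1+\lk=1-\Delta+V$ with $V:=-K|x|^{-1}\ge0$ (recall $K<0$), so that $H\ge1$ is a nonnegative self-adjoint operator whose complex powers $H^{z}$ are defined by the functional calculus; set $H_0:=1-\Delta$. Since $e^{-tH}=e^{-t}e^{-t\lk}$ inherits the Gaussian upper bound of Lemma \ref{lem:heat}, $H$ satisfies a Mikhlin--H\"ormander spectral multiplier theorem on $L^p(\R^3)$, $1<p<\infty$; in particular its imaginary powers obey $\|H^{i\tau}\|_{L^p\to L^p}\lesssim\langle\tau\rangle^{N}$ for some $N$, and the same classical bound holds for $H_0^{i\tau}$.

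First I would prove $\|H^{s/2}f\|_{L^p}\lesssim\|H_0^{s/2}f\|_{L^p}$ by interpolating the analytic family $T_z:=H^{z}H_0^{-z}$ on the strip $0\le\Re z\le1$. On the line $\Re z=0$, $T_{i\tau}=H^{i\tau}H_0^{-i\tau}$ is bounded on $L^p$ with polynomial growth in $\tau$ by the previous paragraph; on the line $\Re z=1$, $T_{1+i\tau}=H^{i\tau}(HH_0^{-1})H_0^{-i\tau}$, so it suffices to bound $HH_0^{-1}$ on $L^p$. Stein's interpolation theorem (its polynomial-growth hypothesis being met by the imaginary-power bounds, absorbed by the standard $e^{\delta z^2}$ regularization) then yields $\|T_{s/2}\|_{L^p\to L^p}<\infty$ for all $0\le s\le2$. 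The reverse inequality follows identically from the family $H_0^{z}H^{-z}$ together with the bound for $H_0H^{-1}$, and the two halves combine to give \eqref{equ:sobequi}.

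It remains to bound $HH_0^{-1}$ and $H_0H^{-1}$ on $L^p$ for $1<p<3$. The forward one is elementary: $HH_0^{-1}=I-K|x|^{-1}H_0^{-1}$, and since Hardy's inequality $\||x|^{-1}g\|_{L^p}\lesssim\|\nabla g\|_{L^p}$ holds precisely for $1<p<3$ while $\nabla H_0^{-1}$ is bounded on $L^p$, we get $\||x|^{-1}H_0^{-1}\|_{L^p\to L^p}<\infty$ and hence $\|HH_0^{-1}\|_{L^p\to L^p}<\infty$. For the reverse one, $H_0H^{-1}=I+K|x|^{-1}H^{-1}$, so everything reduces to $\||x|^{-1}H^{-1}\|_{L^p\to L^p}<\infty$. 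Rather than perturb off $H_0$ (a Neumann series only handles small $|K|$), I would subordinate, $H^{-1}=\int_0^\infty e^{-t}e^{-t\lk}\,dt$, insert the pointwise bound of Lemma \ref{lem:heat}, and evaluate the $t$-integral; this shows that the nonnegative kernel of $H^{-1}$ is dominated by that of a massive free resolvent, $H^{-1}(x,y)\le C(\mu^2-\Delta)^{-1}(x,y)=C\tfrac{e^{-\mu|x-y|}}{4\pi|x-y|}$ for some $\mu>0$. By positivity of kernels, $|x|^{-1}H^{-1}$ is then dominated by $C|x|^{-1}(\mu^2-\Delta)^{-1}$, which is bounded on $L^p$ for $1<p<3$ exactly as in the forward case (Hardy together with boundedness of $\nabla(\mu^2-\Delta)^{-1}$).

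The main obstacle is precisely the reverse $s=2$ estimate $\||x|^{-1}H^{-1}\|_{L^p\to L^p}<\infty$ for all $K<0$: naive perturbation theory is limited to small coupling, and it is exactly here that the Gaussian heat kernel bound of Lemma \ref{lem:heat} is indispensable, since through positivity and subordination it transfers the favorable Yukawa structure of the free resolvent to $H^{-1}$ uniformly in $K<0$. Everything else --- the forward $s=2$ bound and the passage to general $s$ --- is then routine, the constraint $1<p<3$ entering only through Hardy's inequality.
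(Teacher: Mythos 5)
Your proposal is correct and follows essentially the same route as the paper: the endpoint $s=2$ is handled via Hardy's inequality for $1<p<3$ (forward direction elementary, reverse direction powered by the Gaussian heat-kernel bound of Lemma \ref{lem:heat}), followed by polynomially growing imaginary-power bounds and Stein complex interpolation applied to the analytic families $(1+\lk)^z(1-\Delta)^{-z}$ and $(1-\Delta)^z(1+\lk)^{-z}$, exactly as in the paper. The only divergence is presentational: where the paper simply asserts the adapted Hardy inequality $\big\||x|^{-1}f\big\|_{L^p}\lesssim\big\|\sqrt{1+\lk}\,f\big\|_{L^p}$ from the heat-kernel estimate, you make the mechanism explicit by subordinating $(1+\lk)^{-1}$ to the heat semigroup and dominating its kernel by the Yukawa kernel $e^{-\mu|x-y|}/(4\pi|x-y|)$ --- a valid filling-in of precisely the step the paper leaves to the reader.
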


\begin{proof}
The proof is classical and follows from heat kernel estimate and Stein complex interpolation. We refer to Y. Hong\cite{Hong} or the authors \cite{ZZ}, but we give a complete
proof for convenience.

  First, we consider $s=2$. Using the Hardy inequality \cite[Lemma 2.6] {ZZ} with $p<3$, we obtain
  \begin{align*}
    \big\|(1+\lk)f\big\|_{L^p} \leq& \big\| (1-\Delta)f\big\|_{L^p}+|K|\big\|\tfrac{f}{|x|}\big\|_{L^p} \\
    \lesssim & \big\| (1-\Delta)f\big\|_{L^p}+\|\nabla f\|_{L^p}\\
    \lesssim& \big\| (1-\Delta)f\big\|_{L^p}.
  \end{align*}
 By Lemma \ref{lem:heat}, we see the heat kernel operator $e^{-t(1+\lk)}$ obeys the Gaussian heat kernel estimate. Hence we easily get the Hardy's inequality for $p<3$
  $$\big\|\tfrac{f}{|x|}\big\|_{L^p}\lesssim\big\|\sqrt{1+\lk}f\big\|_{L^p}.$$
  Hence,
  \begin{align*}
    \big\|(1-\Delta)f\big\|_{L^p} \leq& \big\| (1+\lk)f\big\|_{L^p}+|K|\big\|\tfrac{f}{|x|}\big\|_{L^p}\\
    \lesssim&\big\| (1+\lk)f\big\|_{L^p}+\big\| \sqrt{1+\lk}f\big\|_{L^p}\\
    \lesssim&\big\| (1+\lk)f\big\|_{L^p}.
  \end{align*}
  This implies \eqref{equ:sobequi} with $s=2$.

Next, since the heat kernel operator $e^{-t(1+\lk)}$ obeys the Gaussian heat kernel estimate, we have by Sikora-Wright \cite{SW}
$$\big\|(1-\Delta)^{ib}f\big\|_{L^p}+\big\|(1+\lk)^{ib}f\big\|_{L^p}\lesssim\langle b\rangle^\frac32,\quad \forall~b\in\R,\;\forall~1<p<+\infty.$$
Let $z=a+ib$, define
$$T_z=(1+\lk)^z(1-\Delta)^{-z}, G_z=(1-\Delta)^z(1+\lk)^{-z}.$$
Then we have that for $1<p<3$
$$\|T_{1+ib}\|_{L^p\to L^p}\leq \langle b\rangle^3\|(1+\lk)(1-\Delta)^{-1}\|_{L^p\to L^p}\leq C\langle b\rangle^3.$$
This shows that
\begin{align*}
  \big\|(1-\Delta)^zf\big\|_{L^p}\lesssim &\langle {\rm Im } z\rangle^\frac32\big\|(1+\lk)^zf\big\|_{L^p} \\
 \big\|(1+\lk)^zf\big\|_{L^p}\lesssim &\langle {\rm Im } z\rangle^\frac32\big\|(1-\Delta)^zf\big\|_{L^p}
\end{align*}
holds for $1<p<+\infty$ when ${\rm Re}z=0$ and for $1<p<3$ when ${\rm Re z}=1$.  Therefore, \eqref{equ:sobequi} follows by the Stein complex interpolation.

\end{proof}

\section{Global well-posedness}
In this section, we prove the well-posedness for equation \eqref{equ1.1} including local and global well-posedness.
In this part, we only use the classical Strichartz estimate for the Schr\"odinger equation without potential $i\pa_tu-\Delta u=0$ on Lorentz space.

In the energy-subcritical case (i.e $p-1<4$), the global well-posedness will follow from local well-posedness theory and uniform kinetic energy
control
\begin{equation}\label{equ:uniforkinetic123}
  \sup_{t\in I}\|u(t)\|_{\dot{H}^1(\R^3)}\leq C(E(u_0),M(u_0)).
\end{equation}

In the energy-critical case ($p-1=4$),  we prove the global well-posedness by using a perturbation argument and the
well-known scattering theory for Schr\"odinger without potential in \cite{CKSTT07, KM}.

\subsection{Local well-posedness for energy-subcritical: $s_c<1$}

\begin{theorem}[Local well-posedness, energy-subcritical]\label{thm:local} Let $K\in\R$, $0<p-1<4$ and $u_0\in H^1(\R^3)$.  Then
there exists $T=T(\|u_0\|_{H^1})>0$ such that the equation
\eqref{equ1.1} with initial data $u_0$ has a unique solution $u$ with
\begin{equation}\label{small}
u\in C(I; H^1(\R^3))\cap L_t^{q_0}(I,W^{1,r_0}(\R^3)),\quad I=[0,T],
\end{equation}
where $(q_0,r_0)=\big(\tfrac{4(p+1)}{3(p-1)},p+1\big)\in\Lambda_0.$
\end{theorem}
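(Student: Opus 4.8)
The plan is to solve \eqref{equ1.1} by a contraction mapping argument applied to the Duhamel formula written relative to the \emph{free} propagator $e^{it\Delta}$, so that both the Coulomb term $\tfrac{K}{|x|}u$ and the nonlinearity $\la|u|^{p-1}u$ are treated as source terms. Concretely I would study the map
\begin{equation*}
\Phi(u)(t)=e^{it\Delta}u_0-i\int_0^t e^{i(t-s)\Delta}\Big(\la|u|^{p-1}u-\tfrac{K}{|x|}u\Big)(s)\,ds
\end{equation*}
on a closed ball of radius $R\sim\|u_0\|_{H^1}$ in the complete metric space
\begin{equation*}
X(I)=\big\{u:\ \|u\|_{X}:=\|\langle\nabla\rangle u\|_{L_t^\infty(I,L_x^2)}+\|\langle\nabla\rangle u\|_{L_t^{q_0}(I,L_x^{r_0})}<\infty\big\},
\end{equation*}
with $T=|I|$ to be chosen small. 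This is exactly why only the classical Strichartz estimate of Lemma \ref{lem:locinstr} is needed: the potential never enters the propagator, so one avoids using the Sobolev-norm equivalence of Lemma \ref{lem:sobnorequ}, which is available only for $K<0$.

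The linear part is immediate: $\|\langle\nabla\rangle e^{it\Delta}u_0\|_X\lesssim\|u_0\|_{H^1}$ by the homogeneous estimate. For the nonlinear source I would commute $\langle\nabla\rangle$ through $e^{i(t-s)\Delta}$, apply the inhomogeneous Strichartz estimate with output and input pair $(q_0,r_0)$, and use the pointwise chain-rule bound $|\nabla(|u|^{p-1}u)|\lesssim |u|^{p-1}|\nabla u|$ (valid for $p>1$) together with the spatial H\"older identity $\tfrac1{r_0'}=\tfrac{p-1}{r_0}+\tfrac1{r_0}$ coming from $r_0=p+1$. Placing the $p-1$ undifferentiated factors in $L_t^\infty L_x^{r_0}$ (controlled through $H^1\hookrightarrow L^{r_0}$, since $r_0=p+1<6$) and the differentiated factor in $L_t^{q_0}L_x^{r_0}$, a H\"older estimate in time over $[0,T]$ yields the subcritical gain
\begin{equation*}
\Big\|\int_0^t e^{i(t-s)\Delta}\la|u|^{p-1}u\,ds\Big\|_X\lesssim T^{\theta}\|u\|_X^{p},\qquad \theta=\tfrac{5-p}{2(p+1)}>0,
\end{equation*}
where $\theta>0$ precisely because $p<5$.

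The genuinely delicate term, and the main obstacle, is the Coulomb source $\tfrac{K}{|x|}u$, for which two features of $|x|^{-1}$ must be confronted: it lies in $L^{3,\infty}$ but \emph{not} in $L^3$, and differentiating it worsens the singularity to $|x|^{-2}$. Here the Lorentz-space form of the inhomogeneous estimate in Lemma \ref{lem:locinstr} is essential. I would again commute $\langle\nabla\rangle$ past the propagator and apply that estimate with the admissible pair $(q_1,r_1)=(2,6)$, so that the source is measured in $L_t^2(I,L_x^{6/5,2})$. Expanding $\nabla\big(\tfrac{u}{|x|}\big)=\tfrac{\nabla u}{|x|}-\tfrac{x}{|x|^3}u$, the first piece is handled by H\"older in Lorentz spaces using $\||x|^{-1}\|_{L^{3,\infty}}<\infty$ (Proposition \ref{Lorentz}), while the more singular term $\tfrac{u}{|x|^2}$ is written as $|x|^{-1}\cdot\tfrac{u}{|x|}$ and controlled by one Lorentz H\"older step followed by Hardy's inequality $\|\tfrac{u}{|x|}\|_{L^2}\lesssim\|\nabla u\|_{L^2}$. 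All Hardy exponents that occur are $\le 2<3$, so Hardy applies, and the net bound is
\begin{equation*}
\Big\|\int_0^t e^{i(t-s)\Delta}\tfrac{K}{|x|}u\,ds\Big\|_X\lesssim |K|\,T^{1/2}\,\|u\|_X.
\end{equation*}

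Combining the three estimates gives $\|\Phi(u)\|_X\le C\|u_0\|_{H^1}+CT^\theta\|u\|_X^p+C|K|T^{1/2}\|u\|_X$, and an identical computation for $\Phi(u)-\Phi(v)$, using $\big||u|^{p-1}u-|v|^{p-1}v\big|\lesssim(|u|^{p-1}+|v|^{p-1})|u-v|$, produces the matching Lipschitz bound. Choosing $R=2C\|u_0\|_{H^1}$ and then $T=T(\|u_0\|_{H^1},|K|)$ small enough that $CT^\theta R^{p-1}+C|K|T^{1/2}\le\tfrac12$ makes $\Phi$ a contraction of the ball into itself, and the Banach fixed point theorem delivers the unique solution in $X(I)$. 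Standard continuity arguments from the Duhamel formula then upgrade it to $u\in C(I;H^1)$, and the difference estimate gives the uniqueness. I note finally that nothing in this scheme uses the sign of $K$, so the result holds for all $K\in\R$.
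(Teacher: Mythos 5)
Your proposal is essentially identical to the paper's own proof: the paper runs the same fixed-point argument on the free-propagator Duhamel formula, bounding the nonlinearity in $L_t^{q_0'}L_x^{r_0'}$ with the subcritical gain $T^{\frac{5-p}{2(p+1)}}$ and the Coulomb source via the Lorentz-space inhomogeneous Strichartz estimate in $L_t^2L_x^{\frac65,2}$ using $\||x|^{-1}\|_{L^{3,\infty}}$, Hardy's inequality, and the factor $T^{1/2}$, with no use of the sign of $K$. The only point to state carefully is that the contraction must be run in the derivative-free metric $d(u,v)=\|u-v\|_{L_t^{q_0}(I,L_x^{r_0})\cap L_t^\infty(I,L_x^2)}$ on the ball (which is complete under this weaker metric), exactly as the paper does: for $1<p<2$ the differentiated difference $\nabla\big(|u|^{p-1}u-|v|^{p-1}v\big)$ admits no usable pointwise bound, and your displayed difference inequality is the undifferentiated one, so claiming a contraction in the full $X$-norm would not close.
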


\begin{proof}

Define the map
\begin{equation}
\Phi(u(t)):=e^{it\Delta}u_0+i\int_0^t e^{i(t-s)\Delta}\Big(\frac{K}{|x|}u-\lambda|u|^{p-1}u\Big)(s)\;ds,
\end{equation}
with $I=[0,T]$
$$B(I)=\big\{u\in Y(I)=C(I,H^1(\R^3))\cap L_t^{q_0}(I,W^{1,r_0}),\; \|u\|_{Y(I)}\leq 2C\|u_0\|_{H^1}\big\},$$
and the metric
$d(u,v)=\|u-v\|_{L_t^{q_0}(I,L_x^{r_0})\cap L_t^\infty(I,L_x^2)}.$

For $u\in B(I)$, we have by Strichartz estimate \eqref{equ:edstrlor}
\begin{align*}
\big\|\Phi(u)\big\|_{Y(I)}\leq& C\|u_0\|_{H^1}+C\Big\|\langle \nabla\rangle \Big(\tfrac{K}{|x|}u\Big)\Big\|_{L_t^2 L_x^{\frac65,2}}+C\big\|\langle \nabla\rangle (|u|^{p-1}u)\big\|_{L_t^{q_0'} L_x^{r_0'}(I\times\R^3)}\\
\leq&C\|u_0\|_{H^1}+C_1T^\frac12\|u\|_{L_t^\infty H^1}+C_1T^{1-\frac{2}{q_0}}
  \|u\|_{L_t^\infty(I, L_x^{r_0})}^{p-1}\|u\|_{L_t^{q_0}(I,W^{1,r_0})}\\
  \leq&C\|u_0\|_{H^1}+C_1T^\frac12\|u\|_{L_t^\infty H^1}+C_1T^{1-\frac{2}{q_0}}\|u\|_{Y(I)}^p\\
\leq&C\|u_0\|_{H^1}+2CC_1T^\frac12\|u_0\|_{H^1}+2CC_1T^{\frac{5-p}{2(p+1)}}\|u_0\|_{H^1}(2C\|u_0\|_{H^1})^{p-1}\\
\leq&2C\|u_0\|_{H^1}
\end{align*}
by taking $T$ small such that
$$2C_1T^\frac12+
2C_1T^{\frac{5-p}{2(p+1)}}(2C\|u_0\|_{H^1})^{p-1}\leq1.$$

On the other hand, for $u,v\in B(I)$, we get by Strichartz estimate
\begin{align*}
d\big(\Phi(u),\Phi(v)\big)=&\Big\|\int_0^te^{i(t-s)\Delta}\big[\frac{K}{|x|}(u-v)-(|u|^{p-1}u-|v|^{p-1}v)\big](s)\;ds\Big\|_{L_t^{q_0}(I,L_x^{r_0})}\\
\leq&C\Big\|\frac{u-v}{|x|}\Big\|_{L_t^2 L_x^{\frac65,2}}+C\big\||u|^{p-1}u-|v|^{p-1}v\big\|_{L_t^{q_0'}(I,L_x^{r_0'})}\\
\leq& CT^\frac12\|u-v\|_{L_t^\infty L_x^2}+CT^{\frac{5-p}{2(p+1)}}\|u-v\|_{L_t^{q_0}(I,L_x^{r_0})}\big\|(u,v)\big\|_{L_t^\infty(I,H^1)}^{p-1}\\
\leq&\frac12d(u,v)
\end{align*}
by taking $T$ small such that
$$CT^\frac12+4CT^{\frac{5-p}{2(p+1)}}(2C\|u_0\|_{H^1})^{p-1}\leq\frac12.$$

A standard fixed point argument gives a unique local solution $u:[0,T]\times\R^3\to\C$ to \eqref{equ1.1}.

\end{proof}

\subsection{Global well-posedness for energy-subcritical: $s_c<1$}\label{sub:global} By the local well-posedness theory and mass conservation, the global well-posedness will follow from the uniform kinetic energy control
\begin{equation}\label{equ:uniforkinetic}
  \sup_{t\in I}\|u(t)\|_{\dot{H}^1(\R^3)}\leq C(E(u_0),M(u_0)).
\end{equation}
We argue the following several cases.\vspace{0.2cm}

{\bf Case 1: the defocusing case, i.e. $\lambda=1$.} In the defocusing case, we have the uniform bound
\begin{equation}\label{equ:unibode}
\|u(t,\cdot)\|_{H^1_x(\R^3)}\leq C(M(u_0),E(u_0)).
\end{equation}
In fact, we have by Hardy's inequality and Young's ineqaulity
\begin{equation}\label{equ:hardyi}
\int_{\R^3}\frac{|u|^2}{|x|}\;dx\leq C\|u\|_{\dot{H}^\frac12}^2\leq C\|u\|_{L^2_x}\|u\|_{\dot{H}^1}\leq \frac{1}{2|K|}\|u\|_{\dot{H}^1}^2+2C^2|K|\cdot\|u\|_{L^2_x}^2,
\end{equation}
which implies
\begin{align*}
E(u_0)=E(u)\geq&\frac14\int_{\R^3}|\nabla u(t)|^2\;dx-C^2|K| M(u_0)
\end{align*}
and hence
$$\|u(t)\|_{H^1}^2\leq C_1M(u_0)+4E(u_0).$$
Therefore we can extend the local existence to be a global one.\vspace{0.2cm}

{\bf  Case 2: $\lambda=-1, 0<p-1<\frac43$.}  In this case, we have by Gagliardo-Nirenberg inequality and Young's inequality
$$\|u\|_{L_x^{p+1}}^{p+1}\leq C\|u\|_{L_x^2}^\frac{5-p}{2}\|u\|_{\dot{H}^1}^\frac{3(p-1)}{2}\leq C_1M(u_0)^\frac{5-p}{7-3p}+\frac{p+1}8
\|u\|_{\dot{H}^1}^2.$$
This together with \eqref{equ:hardyi} implies
\begin{align*}
E(u_0)=E(u)\geq&\frac18\int_{\R^3}|\nabla u(t)|^2\;dx-C^2|K| M(u_0)-\frac{C_1}{p+1}M(u_0)^\frac{5-p}{7-3p},
\end{align*}
and so
$$\|u(t)\|_{H^1}^2\leq C_1M(u_0)+8E(u_0).$$
Thus we can obtain the global existence by extending the local solution.\vspace{0.2cm}

{\bf Case 3: $\lambda=-1,\;p=\frac73,\;M(u_0)<M(Q).$} For the mass-critical equation:
$$i\pa_tu+\Delta u+\frac{K}{|x|}u+|u|^\frac{4}{3}u=0.$$
From \eqref{equ:hardyi}, we obtain
$$\Big|\frac{K}2\int_{\R^3}\frac{|u|^2}{|x|}\;dx\Big|\leq \frac{\varepsilon}{2}\|\nabla u\|_{L^2}^2+\frac{C^2|K|}{\varepsilon}M(u_0).$$
One the other hand, we have by the sharp Gagliardo-Nirenberg inequality
$$\frac{3}{10}\int_{\R^3}|u|^\frac{10}{3}\;dx\leq\frac12\Big(\frac{\|u\|_{L^2}}{\|Q\|_{L^2}}\Big)^\frac{4}{3}\|\nabla u\|_{L^2}^2.$$
Hence,
\begin{align*}
E(u_0)\geq& \frac12\|\nabla u(t,\cdot)\|_{L^2}^2\Big(1-\varepsilon-\Big(\frac{\|u\|_{L^2}}{\|Q\|_{L^2}}\Big)^\frac{4}{3}\Big)-\frac{C^2|K|}{\varepsilon}M(u_0)\\
\geq&\frac14\|\nabla u(t,\cdot)\|_{L^2}^2\Big(1-\Big(\frac{\|u\|_{L^2}}{\|Q\|_{L^2}}\Big)^\frac{4}{3}\Big)-\frac{C^2|K|}{\varepsilon}M(u_0).
\end{align*}
This shows
$$\|u(t,\cdot)\|_{L_t^\infty H^1_x}\leq C(M(u_0),E(u_0)).$$\vspace{0.2cm}

{\bf Case 4: $\lambda=-1,\; K<0,\; \frac43<p-1<4.$} In this case, we assume that
$$  M(u_0)^{1-s_c}E(u_0)^{s_c}<M(Q)^{1-s_c}E_0(Q)^{s_c},\; \|u_0\|_{L^2}^{1-s_c}\|u_0\|_{\dot{H}^1}^{s_c}<\|Q\|_{L^2}^{1-s_c}\|Q\|_{\dot{H}^1}^{s_c}.$$
Then, there exists $\delta>0$ such that
$$M(u_0)^{1-s_c}E(u_0)^{s_c}\leq(1-\delta)M(Q)^{1-s_c}E_0(Q)^{s_c}.$$
By the sharp Gagliardo-Nirenberg inequality, we have
\begin{equation}\label{equ:gnineq}
\|f\|_{L_x^{p+1}}^{p+1}\leq C_0\|f\|_{L_x^2}^{\frac{5-p}2}
\|f\|_{\dot H^1}^{\frac{3(p-1)}2},
\end{equation}
with the sharp constant
\begin{equation}\label{equ:qah1}
C_0\|Q\|_{L^2}^{(1-s_c)(p-1)}\|Q\|_{\dot{H}^1}^{s_c(p-1)}=\frac{2(p+1)}{3(p-1)}.
\end{equation}
This shows  for $K<0$
\begin{align*}
(1-\delta)M(Q)^{1-s_c}E_0(Q)^{s_c} \geq& M(u)^{1-s_c} E(u)^{s_c}\\
\geq& \|u(t)\|_{L_x^2}^{2(1-s_c)}\Big(\frac12\|u(t)\|_{\dot
H^1}^2-\frac{C_0}{p+1}\|u(t)\|_{L_x^2}^{\frac{5-p}2}
\|u(t)\|_{\dot H^1}^{\frac{3(p-1)}2}\Big)^{s_c}
\end{align*}
for any $t\in I$. This together with
\begin{equation}\label{equ:idenenery}
E_0(Q)=\frac{3p-7}{6(p-1)}\|Q\|_{\dot{H}^1}^2=\frac{3p-7}{4(p+1)}\|Q\|_{L_x^{p+1}}^{p+1},
\end{equation}
implies that
\[
(1-\delta)^\frac1{s_c}\geq
\frac{3(p-1)}{3p-7}\biggl(\frac{\|u(t)\|_{L_x^2}^{1-s_c}\|u(t)\|_{\dot
H^1}^{s_c}}{\|Q\|_{L_x^2}^{1-s_c} \|Q\|_{\dot
H^1}^{s_c}}\biggr)^\frac2{s_c} -
\frac2{3p-7}\biggl(\frac{\|u(t)\|_{L_x^2}^{1-s_c} \|u(t)\|_{\dot
H^1}^{s_c}}{\|Q\|_{L_x^2}^{1-s_c} \|Q\|_{\dot
H^1}^{s_c}}\biggr)^{\frac2{s_c}(p-1)}.
\]
Using a continuity argument, together with the observation that
\[
(1-\delta)^\frac1{s_c} \geq \frac{3(p-1)}{3p-7}y^\frac2{s_c} -
\frac2{3p-7}y^{\frac2{s_c}(p-1)} \Rightarrow |y-1|\geq \delta'
\quad \text{for some}\quad \delta'=\delta'(\delta)>0,
\]
we obtain
\begin{equation}\label{equ:k0negt}
 \|u(t)\|_{L^2}^{1-s_c}\|u(t)\|_{\dot{H}^1}^{s_c}<\|Q\|_{L^2}^{1-s_c}\|Q\|_{\dot{H}^1}^{s_c},\quad \forall~t\in I.
\end{equation}

In sum, we obtain the uniform kinetic energy control in the maximal life-span.
Therefore, we conclude the proof of Theorem \ref{thm:global}.

\begin{remark}\label{rem:tde}
$(i)$ For $K<0$ and $\lambda=-1$, we remark that under the assumption $M(u_0)^{1-s_c}E(u_0)^{s_c}\leq(1-\delta)M(Q)^{1-s_c}E_0(Q)^{s_c}$ for some $\delta>0$, the condition
\begin{equation}\label{equ:weakcond}
  \|u_0\|_{L^2}^{1-s_c}\|u_0\|_{\dot{H}^1}^{s_c}<\|Q\|_{L^2}^{1-s_c}\|Q\|_{\dot{H}^1}^{s_c}
\end{equation}
is equivalent to
\begin{equation}\label{equ:strocond}
  \|u_0\|_{L^2}^{1-s_c}\Big(\|u_0\|_{\dot{H}^1}^2-K\big\||x|^{-\frac12}u_0\big\|_{L^2}^2\Big)^{\frac{s_c}2}<\|Q\|_{L^2}^{1-s_c}\|Q\|_{\dot{H}^1}^{s_c}.
\end{equation}
We take $s_c=\tfrac12$ for example. In this case, we have $p=3,$ and the ground state $Q$ solves
$$-\Delta Q+Q=Q^3.$$
A simple computation shows that
\begin{equation}\label{equ:energ}
  E_0(Q_0)=\frac16\|Q_0\|_{\dot{H}^1}^2=\frac18\|Q_0\|_{L^4}^4=\frac12\|Q_0\|_{L^2}^2
\end{equation}
  and
  \begin{equation}\label{equ:c0}
    C_0:=\frac{\|Q\|_{L^4}^4}{\|Q\|_{L^2}\|Q\|_{\dot{H}^1}^3}=\frac43\frac{1}{\|Q\|_{L^2}\|Q\|_{\dot{H}^1}}.
  \end{equation}

Since $K<0$, it is easy to get \eqref{equ:weakcond} from \eqref{equ:strocond}.

Now, we assume \eqref{equ:weakcond}. By the sharp Gaglilardo-Nirenberg's inequality
$$\|u\|_{L^4}^4\leq C_0\|u\|_{L^2}\|u\|_{\dot{H}^1}^3$$
and using  \eqref{equ:c0}, we obtain
\begin{align*}
  M(u_0)E(u_0)=&\frac12\|u_0\|_{L^2}^2\Big( \|u_0\|_{\dot{H}^1}^2-K\big\||x|^{-\frac12}u_0\big\|_{L^2}^2\Big)-\frac14\|u_0\|_{L^2}^2\|u_0\|_{L^4}^4\\
  \geq&\frac12\|u_0\|_{L^2}^2\Big( \|u_0\|_{\dot{H}^1}^2-K\big\||x|^{-\frac12}u_0\big\|_{L^2}^2\Big)-\frac{C_0}4\|u_0\|_{L^2}^3\|u_0\|_{\dot{H}^1}^3\\
  \geq&\frac12\|u_0\|_{L^2}^2\Big( \|u_0\|_{\dot{H}^1}^2-K\big\||x|^{-\frac12}u_0\big\|_{L^2}^2\Big)-\frac{C_0}4\|Q\|_{L^2}^3\|Q\|_{\dot{H}^1}^3\\
  =&\frac12\|u_0\|_{L^2}^2\Big( \|u_0\|_{\dot{H}^1}^2-K\big\||x|^{-\frac12}u_0\big\|_{L^2}^2\Big)-\frac13\|Q\|_{L^2}^2\|Q\|_{\dot{H}^1}^2.
\end{align*}
This together with the assumption $M(u_0)E(u_0)\leq(1-\delta)M(Q)E_0(Q)$ and \eqref{equ:energ} yields that
\begin{align*}
  \frac12\|u_0\|_{L^2}^2\Big( \|u_0\|_{\dot{H}^1}^2-K\big\||x|^{-\frac12}u_0\big\|_{L^2}^2\Big)\leq & M(u_0)E(u_0)+\frac13\|Q\|_{L^2}^2\|Q\|_{\dot{H}^1}^2\\
  \leq&(1-\delta)M(Q)E_0(Q)+\frac13\|Q\|_{L^2}^2\|Q\|_{\dot{H}^1}^2\\
  =&\frac{3-\delta}{6}\|Q\|_{L^2}^2\|Q\|_{\dot{H}^1}^2.
\end{align*}
And so
$$\|u_0\|_{L^2}^2\Big( \|u_0\|_{\dot{H}^1}^2-K\big\||x|^{-\frac12}u_0\big\|_{L^2}^2\Big)<\|Q\|_{L^2}^2\|Q\|_{\dot{H}^1}^2.$$

$(ii)$ By the same argument as in $(i)$, for $K<0$, $\lambda=-1$ and $p=5$, under the assumption $E(u_0)<E_0(W),$
 the condition
\begin{equation}\label{equ:weakcondener}
  \|u_0\|_{\dot{H}^1}<\|W\|_{\dot{H}^1}
\end{equation}
is equivalent to
\begin{equation}\label{equ:strocondener}
  \|u_0\|_{\dot{H}^1}^2-K\big\||x|^{-\frac12}u_0\big\|_{L^2}^2<\|W\|_{\dot{H}^1}^2.
\end{equation}
\end{remark}

\subsection{Global well-posedness for energy-critical: $s_c=1$ and $K<0$}
We will show the global well-posedness
by controlling global kinetic energy and  proving ``good local
well-posedness" as in Zhang \cite{Zhang}. More precisely, we will show that there exists a
small constant $T=T(\|u_{0}\|_{H^{1}_{x}})$ such that \eqref{equ1.1}
is well-posed on $[0,T]$, which is so-called ``good local
well-posed". On the other hand, since the equation in \eqref{equ1.1}
is time translation invariant, this ``good local well-posed"
combining with the global kinetic energy control gives immediately
the global well-posedness.

{\bf Step 1. global kinetic energy.} For the defocusing case ($\lambda=1$), it follows from Case 1 in Subsection \ref{sub:global} that
$$\sup_{t\in I}\|u(t,\cdot)\|_{H^1}^2\leq C_1M(u_0)+4E(u_0).$$

While for the focusing case $(\lambda=-1)$ and $K<0$, under the restriction
 \begin{equation}\label{equ:energythre123}
   E(u_0)<E_0(W),\; \|u_0\|_{\dot{H}^1}<\|W\|_{\dot{H}^1},
 \end{equation}
 we easily obtain
  \begin{equation}\label{equ:energythre321}
   E_0(u_0)<E_0(W),\; \|u_0\|_{\dot{H}^1}<\|W\|_{\dot{H}^1}.
 \end{equation}
Hence, we have by coercivity as in \cite{KM}
\begin{equation}\label{equ:uniformkneg}
  \sup_{t\in I}\|u(t)\|_{\dot{H}^1}<cE_0(u)<cE(u_0)<cE_0(W).
\end{equation}
And so we derive the  global kinetic energy.

{\bf Step 2: good local well-posedness.} To obtain it,
we first introduce several spaces and give estimates of the nonlinearities in terms of these
spaces. For a time slab $I\subset \R$, we define
$$\dot{X}^0_I:=L_{t,x}^\frac{10}{3}\cap L_t^{10}L_x^\frac{30}{13}(I\times\R^3),\; \dot{X}^1_I:=\{f:\nabla f\in\dot{X}^0_I\},\; X_I^1=\dot{X}^0_I\cap\dot{X}^1_I.$$
Then, we have by H\"older's inequality and Sobolev embedding
\begin{equation}\label{equ:nonlinest}
  \big\|\nabla^i\big(u^kv^{4-k}\big)\big\|_{L_{t,x}^{\frac{10}{7}}(I\times\R^3)}\lesssim
  \|u\|_{\dot{X}^1_x}^{p-1}\|u\|_{\dot{X}^i_I},
\end{equation}
for $i=0,1$, and
\begin{equation}\label{equ:harsob}
  \big\|\langle\nabla\rangle\big(\tfrac{u}{|x|}\big)\big\|_{L_t^2(I;L_x^{\frac{6}{5},2})}\leq C|I|^\frac12\|u\|_{L_t^\infty(I;H^1_x)}.
\end{equation}

Now, it follows from \cite{CKSTT07} for the defocusing case ($\lambda=1$) and \cite{KM} for the focusing case $(\lambda=-1)$ under the assumption \eqref{equ:energythre321} and $u_0$ radial that the Cauchy problem
\begin{align}\label{aequ2}
\begin{cases}
i\partial_tv+\Delta v=\lambda|v|^4v,\quad (t,x)\in \mathbb{R}\times\mathbb{R}^3,\\
v(0)=u_0,
\end{cases}
\end{align}
is globally well-posed and the global solution $v$ satisfies the
estimate
\begin{align}\label{equ31}
\|v\|_{L^{q}(\mathbb{R};\dot{W}^{1,r}_x)}\leqslant
C(\|u_{0}\|_{\dot{H}^{1}}),\quad \|v\|_{L^{q}(\mathbb{R};L^r)}\leq
C(\|u_{0}\|_{\dot H^{1}})\|u_0\|_{L^2}
\end{align}
for all $(q,r)\in\Lambda_0$. So to recover $u$ on the time
interval $[0,T]$, where $T$ is a small constant to be specified
later, it's sufficient to solve the difference equation of $\omega$
with 0-data initial on the time interval $[0,T]$,
\begin{align}\label{equ32}
\begin{cases}
i\omega_t+\Delta \omega=-\tfrac{K}{|x|}(v+\omega)-\lambda|v+\omega|^4(v+\omega)+\lambda|v|^4v\\
\omega(0)=0.
\end{cases}
\end{align}
In order to solve \eqref{equ32}, we subdivide $[0,T]$ into finite
subintervals such that on each subinterval, the influence of $v$ to
the problem \eqref{equ32} is very small.

Let $\epsilon$ be a small constant, from \eqref{equ31}, it allows us
to divide $\mathbb{R}$ into subintervals $I_{0},\ldots I_{J-1}$ such
that on each $I_{j}$,
\begin{align*}
\|v\|_{X^1(I_{j})}\thicksim \epsilon,\quad 0\leq j\leq J-1\quad
\text{with}~J\leq C(\|u_{0}\|_{H^1},\epsilon).
\end{align*}
So without loss of generality and renaming the intervals if
necessary, we can write
\begin{align*}
[0,T]=\bigcup\limits_{j=0}^{J^\prime}I_{j},\quad
I_{j}=[t_{j},t_{j+1}]
\end{align*}
with $J^{\prime}\leqslant J$ and on each
$I_{j}$\begin{equation}\label{ad1}\|v\|_{X^1(I_j)}\lesssim\epsilon.\end{equation}
Now we begin to solve the difference equation \eqref{equ32} on each
$I_{j}$ by inductive arguments. More precisely, we show that for
each $0\leqslant j\leqslant J^\prime-1$, there exists a unique
solution $\omega$ to \eqref{equ32} on $I_{j}$ such that
\begin{align}\label{equ33}
\|\omega\|_{X^1(I_j)}+\|\omega\|_{L^{\infty}(I_{j};H^{1})}\leq
(2C)^{j}T^{\frac{1}{4}}.
\end{align}
 We mainly utilize the induction argument. Assume \eqref{equ32} has been solved on $I_{j-1}$ and the solution
$\omega$ satisfies the bound \eqref{equ33} until to $j-1$, it is enough
to derive the bound of the $\omega$ on $I_{j}$.

Define the solution map
\begin{align*}
\Phi(\omega(t))=e^{i(t-t_{j})\Delta}w(t_j)+i\int_{t_j}^{t}e^{i(t-s)\Delta}
\Big(\tfrac{K}{|x|}(v+\omega)+\lambda|v+\omega|^4(v+\omega)-\lambda|v|^4v\Big)(s)ds
\end{align*}
and a set
\begin{align*}
B=\{\omega:\|\omega\|_{L^\infty(I_{j};H^1)}+\|\omega\|_{X^1(I_j)}\leq
(2C)^jT^{\frac{1}{4}}\}
\end{align*}
and the norm $\|\cdot\|_B$ is taken as the same as the one in the
capital bracket. Then it suffices to show that $B$ is stable and the
solution map $\Phi$ is contractive under the weak topology
$\dot{X}^0(I_j)\cap L_t^\infty(I_j,L_x^2)$. Actually, it follows from the Strichartz estimate on Lorentz space and \eqref{equ:nonlinest}, \eqref{equ:harsob} that
\begin{align*}
\|\Phi(\omega)\|_{B}\lesssim&
\|\omega(t_j)\|_{H^1}+\big\||v+w|^4(v+w)-|v|^4v\big\|_{L^{\frac{10}{7}}(I_j;W^{1,\frac{10}{7}}_x)}
+\Big\|\langle \nabla\rangle \Big(\tfrac{K}{|x|}(v+\omega)\Big)\Big\|_{L^2(I_j;L^{\frac{6}{5},2})}\\
\lesssim&\|\omega(t_j)\|_{H^1}+\sum\limits_{i=0}^{4}\|v\|_{X^1(I_j)}^i
\|\omega\|_{X^1(I_j)}^{5-i}+T^{\frac{1}{2}}\|v+\omega\|_{L_t^\infty(I_j; H^1_x)}
\end{align*}
Thus, \eqref{equ31} and \eqref{ad1} gives
\begin{align*}
\|\Phi(\omega)\|_{B}&\leq C\Big(
\|\omega(t_j)\|_{H^1}+\sum\limits_{i=0}^{4}\epsilon^i\|\omega\|_{X^1(I_j)}^{5-i}+CT^{\frac{1}{2}}+T^{\frac{1}{2}}
\|\omega\|_{L_t^\infty (I_j;H^1_x)}\Big).
\end{align*}
Plugging the inductive assumption $\|\omega(t_j)\|_{H^1}\leq
(2C)^{j-1}T^{\frac{1}{4}}$, we see that for $\omega\in B$,
\begin{align}\label{equ36}
\|\Phi(\omega)\|_{B}
\leq& C\big[(2C)^{j-1}
+\epsilon^4(2C)^j+CT^\frac14+(2C)^{j}T^{\frac{3}{4}}\big]T^{\frac{1}{4}}\\\label{equ37}
&+C\sum\limits_{i=0}^3((2C)^jT^{\frac{1}{4}})^{5-i}\epsilon^i
\end{align}
Thus we can choose $\epsilon$ and $T$ small depending only on the Strichartz
constant such that
\begin{align*}
\eqref{equ36}\leq\frac{3}{4}(2C)^jT^{\frac{1}{4}}.
\end{align*}
Fix this $\epsilon$, \eqref{equ37} is a higher order term with
respect to the quantity $T^{\frac{1}{4}}$, we have
\begin{align*}
\eqref{equ37}\leq\frac{1}{4}(2C)^jT^{\frac{1}{4}},
\end{align*}
which is available by choosing $T$ small enough. Of course $T$ will
depend on $j$, however, since $j\leqslant J^\prime-1\leq
C(\|u_{0}\|_{H^1})$, we can choose $T$ to be a small constant
depending only on $\|u_{0}\|_{H^1}$ and $\epsilon$, therefore is
uniform in the process of induction. Hence
\begin{align*}
\|\Phi(\omega)\|_{B}&\leq (2C)^jT^{\frac{1}{4}}.
\end{align*}
On the other hand, by a similarly argument as before, we have, for
$\omega_1, \omega_2\in B$
\begin{align*}
&\|\Phi(\omega_{1})-\Phi(\omega_2)\|_{\dot{X}^0{(I_j)}\cap L^\infty_t(I_j,L_x^2)} \\ \leq&C\big\|\tfrac{\omega_1-\omega_2}{|x|}\big\|_{L_t^2(I_j;L_x^{\frac{6}{5}})}+C\big\||v+\omega_1|^4(v+\omega_1)-|v+\omega_2|^4(v+\omega_2)\big\|_{
L_{t,x}^\frac{10}{7}(I_j\times\R^3)}\\
\leq&CT^\frac12\|\omega_1-\omega_2\|_{L^\infty_t(I_j,L_x^2)}+C\|\omega_1-\omega_2\|_{\dot{X}^0{(I_j)}}\big(
\|v\|_{\dot{X}^1(I_j)}^4+\|\omega_1\|_{\dot{X}^1(I_j)}^4+\|\omega_2\|_{\dot{X}^1(I_j)}^4\big)\\
\leq&\|\omega_1-\omega_2\|_{\dot{X}^0{(I_j)}\cap L^\infty_t(I_j,L_x^2)}\big(CT^\frac12+\epsilon^4+2(2C)^jT^{\frac{1}{4}}\big),
\end{align*}
which allows us to derive
\begin{equation*}
\|\Phi(\omega_{1})-\Phi(\omega_2)\|_{\dot{X}^0{(I_j)}\cap L^\infty_t(I_j,L_x^2)}\leq\frac{1}{2}\|\omega_1-\omega_2\|_{\dot{X}^0{(I_j)}\cap L^\infty_t(I_j,L_x^2)},
\end{equation*}
by taking $\epsilon,T$ small such that
\begin{equation*}
CT^\frac12+\epsilon^4+2(2C)^jT^{\frac{1}{2}}\leq\frac{1}{4}.
\end{equation*}
A standard fixed point argument gives a unique solution $\omega$ of
\eqref{equ32} on $I_j$ which satisfies the bound \eqref{equ33}.
Finally, we get a unique solution of \eqref{equ32} on $[0,T]$ such
that
\begin{align*}
\|\omega\|_{X^1([0,T])}\leq\sum\limits_{j=0}^{J^\prime-1}\|\omega\|_{X^1(I_j)}
\leq\sum\limits_{j=0}^{J^\prime-1}(2C)^jT^{\frac{1}{4}}\leq
C(2C)^JT^{\frac{1}{2}}\leq C.
\end{align*}
Since on $[0,T]$, $u=v+\omega$, we obtain a unique solution to
\eqref{equ1.1} on $[0,T]$ such that
\begin{align*}
\|u\|_{X^1([0,T])}\leq
\|\omega\|_{X^1([0,T])}+\|v\|_{X^1([0,T])}\leq C(\|u_{0}\|_{H^1}).
\end{align*}

As we mentioned before, this ``good local well-posedness" combining
with the ``global kinetic energy control" as in Step 1 gives finally the global
well-posedness. However, since the solution is connected one
interval by another, it does not  have global space-time bound. In
the following, we will discuss the defocusing case, in which the global
solution have the enough decay to imply scattering.

%%%%%%%%%%%%%%%%%%%%%%%%%%%%%%%%%%%%%%%%%%%%

%%%%%%%%%%%%%%%%%%%%%%%%%%%%%%%%%%%%%%%%%%%%

\section{Morawetz estimate and scattering theory}
In this section, we establish an interaction Morawetz estimate and the scattering theory in Theorem \ref{thm:scattering}.
In the whole of the section, we are in the defocusing case with repulsive potential, that is,  $K<0$ and $\lambda=1$.

\subsection{Morawetz estimate} In this subsection, we establish the interaction Morawetz estimate for  \eqref{equ1.1} with  $K<0$ and $\lambda=1$.
\begin{lemma}\label{lem:virial}
Let  $u:\R\times\R^3\to \C$ solve $i\pa_tu+\Delta u+V(x) u=\mathcal{N},$ and $\mathcal{N}\bar{u}\in \R$. Given a smooth weight $w:\R^3\to\R$ and a (sufficiently smooth and
decaying) solution $u$ to \eqref{equ1.1}, we define
$$I(t,w)=\int_{\R^3} w(x)|u(t,x)|^2\;dx.$$
Then, we have
\begin{align}\label{equ:firstd}
\pa_tI(t,w)=&2{\rm Im}\int_{\R^3}\bar{u}\nabla u\cdot \nabla w\;dx,\\ \label{equ:secdd}
\pa_{tt}I(t,w)=&-\int_{\R^3}|u|^2\Delta^2w\;dx+4{\rm Re}\int \pa_ju\pa_k\bar{u}\pa_j\pa_kw\\\nonumber
&+\int_{\R^3}|u|^2\nabla V\cdot\nabla w\;dx
+2{\rm Re}\int\big(\mathcal{N}\nabla\bar{u}-\bar{u}\nabla\mathcal{N}\big)\cdot\nabla w\;dx.
\end{align}

\end{lemma}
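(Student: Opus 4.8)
The plan is to treat both identities as the standard virial computation: differentiate $I(t,w)$ directly in $t$, use the equation to trade each time derivative of $u$ for spatial derivatives, and integrate by parts. Throughout I would work formally with $u$ smooth and rapidly decaying, exactly as the hypothesis permits, and recover the general case by approximation. The two structural facts that drive everything are that $V$ is real and that $\mathcal N\bar u\in\R$; these are precisely what make the potential and the nonlinearity invisible to the first derivative and reduce them to pure force terms in the second.

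For \eqref{equ:firstd} I would differentiate under the integral,
\begin{equation*}
\partial_t I(t,w)=\int_{\R^3}w\,\partial_t|u|^2\,dx=2\int_{\R^3}w\,\Re(\bar u\,\partial_t u)\,dx,
\end{equation*}
and substitute $\partial_t u=i(\Delta u+Vu-\mathcal N)$ from the equation. Using $\Re(iz)=-\Im(z)$ and the facts that $V|u|^2$ is real and $\mathcal N\bar u\in\R$, the $V$ and $\mathcal N$ terms drop out and one is left with $2\Re(\bar u\,\partial_t u)=-2\Im(\bar u\,\Delta u)$. The elementary identity $\Im(\bar u\,\Delta u)=\nabla\!\cdot\!\Im(\bar u\,\nabla u)$ (the cross term $\Im(\nabla\bar u\cdot\nabla u)$ being real and hence vanishing) followed by one integration by parts produces \eqref{equ:firstd}.

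For \eqref{equ:secdd} I would differentiate the momentum density $J_k:=\Im(\bar u\,\partial_k u)$, so that $\partial_t I=2\int \partial_k w\,J_k\,dx$ and $\partial_{tt}I=2\int\partial_k w\,\partial_t J_k\,dx$. Substituting the equation once more into $\partial_t J_k=\Im(\partial_t\bar u\,\partial_k u+\bar u\,\partial_k\partial_t u)$, the terms proportional to $V\,\Re(\bar u\,\partial_k u)$ cancel between the two pieces and one obtains the local momentum law
\begin{equation*}
\partial_t J_k=\Re\big(\bar u\,\partial_k\Delta u-\Delta\bar u\,\partial_k u\big)+(\partial_k V)\,|u|^2+\Re\big(\bar{\mathcal N}\,\partial_k u-\bar u\,\partial_k\mathcal N\big).
\end{equation*}
The three groups generate, in order, the kinetic, potential, and nonlinear terms of \eqref{equ:secdd}. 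For the kinetic group I would write $\bar u\,\partial_k\Delta u-\Delta\bar u\,\partial_k u=\partial_j(\bar u\,\partial_k\partial_j u-\partial_j\bar u\,\partial_k u)$, integrate by parts in $j$ against $\partial_k w$, and then use $\Re(\bar u\,\partial_j\partial_k u)=\tfrac12\partial_j\partial_k|u|^2-\Re(\partial_j\bar u\,\partial_k u)$; a further integration by parts on the $\partial_j\partial_k|u|^2$ piece yields the bi-Laplacian term $-\int|u|^2\Delta^2w$, while the remainder is the Hessian term $4\Re\int\partial_j u\,\partial_k\bar u\,\partial_j\partial_k w$. The force term $(\partial_k V)|u|^2$, integrated against $\partial_k w$, gives the potential term $\int|u|^2\nabla V\cdot\nabla w$, and the nonlinear group gives the last term after noting $\Re(\bar{\mathcal N}\,\partial_k u)=\Re(\mathcal N\,\partial_k\bar u)$.

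The genuinely delicate point is the bookkeeping in the kinetic group: the two integrations by parts must be arranged so that exactly $-\Delta^2w$ and the Hessian $\partial_j\partial_k w$ survive, and one must track the split $\Re(\bar u\,\partial_j\partial_k u)=\tfrac12\partial_j\partial_k|u|^2-\Re(\partial_j\bar u\,\partial_k u)$ carefully so as not to mislay a factor of two. Everything else is symbol-pushing. The remaining subtlety is purely analytic—justifying differentiation under the integral and the boundary-free integrations by parts—and this is where the ``sufficiently smooth and decaying'' hypothesis is used; for the actual application to \eqref{equ1.1} with $V=K|x|^{-1}$, where $\nabla V$ is singular at the origin and decays slowly, I would first regularize with a cutoff, carry out the identity on the approximation, and pass to the limit at the end.
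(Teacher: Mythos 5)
Your proof follows essentially the same route as the paper's: differentiate $I(t,w)$, substitute $\partial_t u=i(\Delta u+Vu-\mathcal N)$, and integrate by parts. Your reorganization through the momentum density $J_k=\Im(\bar u\,\partial_k u)$ is just the paper's computation with the two time derivatives grouped differently; your first identity, your kinetic bookkeeping (the split $\Re(\bar u\,\partial_j\partial_k u)=\tfrac12\partial_j\partial_k|u|^2-\Re(\partial_j\bar u\,\partial_k u)$, producing $-\Delta^2w$ and the Hessian term with the correct coefficient $4$), and your nonlinear term all check out against the paper's argument.

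There is, however, one concrete arithmetic inconsistency in your final assembly, located in the potential term. Your local momentum law $\partial_t J_k=\dots+(\partial_kV)|u|^2+\dots$ is correct, but since \eqref{equ:firstd} gives $\partial_{tt}I=2\int\partial_k w\,\partial_t J_k\,dx$, the force term actually produces
\begin{equation*}
2\int_{\R^3}|u|^2\,\nabla V\cdot\nabla w\;dx,
\end{equation*}
i.e.\ \emph{twice} the coefficient you claim; your step ``the force term $(\partial_kV)|u|^2$, integrated against $\partial_kw$, gives $\int|u|^2\nabla V\cdot\nabla w$'' silently drops this factor, so your conclusion does not follow from your own (correct) momentum law. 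Interestingly, the paper's proof contains exactly the same slip: its penultimate display carries $2\Re\int\big(\bar u\nabla(Vu)-V\bar u\nabla u\big)\cdot\nabla w\,dx$, which equals $2\int|u|^2\nabla V\cdot\nabla w\,dx$ identically, and the factor $2$ vanishes in the last line of \eqref{equ:secdd}. A sanity check confirms that $2$ is the correct coefficient: for the stationary solution $u=e^{i(K/2)^2t}e^{-K|x|/2}$ of the linear equation ($\mathcal N=0$, $V=K/|x|$, $K>0$) and $w=|x|^2$ one must have $\partial_{tt}I\equiv0$, and indeed $8\|\nabla u\|_{L^2}^2-4K\int|u|^2|x|^{-1}dx=0$ (the quantum virial theorem $2\langle T\rangle=\langle x\cdot\nabla V_{\mathrm{pot}}\rangle$), whereas the coefficient $1$ would give the contradiction $8\|\nabla u\|_{L^2}^2-2K\int|u|^2|x|^{-1}dx\neq0$. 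So you should keep your momentum law and correct the coefficient in the target identity instead; note the constants downstream (Remark \ref{rem:moride}, Lemma \ref{lem:moraw}, Corollary \ref{cor:virial}) inherit the change, though the signs of the relevant terms, and hence the qualitative conclusions, do not.
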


\begin{proof}
First, note that
\begin{equation}\label{equ:ut}
\pa_tu=i\Delta u+iV(x)u-i\mathcal{N},
\end{equation}
we get
\begin{align*}
\pa_tI(t,w)=&2{\rm Re}\int_{\R^3}w(x)\pa_tu\bar{u}\;dx\\
=&2{\rm Re}\int_{\R^3}w(x)\big(i\Delta u+iV(x)u-i\mathcal{N}\big)\bar{u}\;dx\\
=&-2{\rm Im}\int_{\R^3}w(x)\Delta u\bar{u}\;dx\\
=&2{\rm Im}\int_{\R^3}\bar{u}\nabla u\cdot \nabla w\;dx.
\end{align*}

Furthermore,
\begin{align*}
\pa_{tt}I(t,w)=&2{\rm Im}\int_{\R^3}\bar{u}_t\nabla u\cdot \nabla w\;dx+2{\rm Im}\int_{\R^d}\bar{u}\nabla u_t\cdot \nabla w\;dx\\
=&2{\rm Im}\int_{\R^3}\big(-i\Delta \bar{u}-iV(x)\bar{u}+i\bar{\mathcal{N}}\big)\nabla u\cdot \nabla w\;dx\\
&+2{\rm Im}\int_{\R^3}\bar{u}\nabla \big(i\Delta u+iV(x)u-i\mathcal{N}\big)\cdot \nabla w\;dx\\
=&2{\rm Re}\int \big(-\Delta \bar{u}\nabla u+\bar{u}\nabla\Delta u\big)\cdot\nabla w\;dx\\
&+2{\rm Re}\int\big(\bar{u}\nabla(Vu)-V\bar{u}\nabla u\big)\cdot\nabla w\;dx\\
&+2{\rm Re}\int\big(\mathcal{N}\nabla\bar{u}-\bar{u}\nabla\mathcal{N}\big)\cdot\nabla w\;dx\\
=&-\int_{\R^3}|u|^2\Delta^2w\;dx+4{\rm Re}\int \pa_ju\pa_k\bar{u}\pa_j\pa_kw\\
&+\int_{\R^3}|u|^2\nabla V\cdot\nabla w\;dx
+2{\rm Re}\int\big(\mathcal{N}\nabla\bar{u}-\bar{u}\nabla\mathcal{N}\big)\cdot\nabla w\;dx.
\end{align*}

\end{proof}

\begin{remark}\label{rem:moride}
$(i)$ For $\mathcal{N}=\lambda|u|^{p-1}u$, so $\mathcal{N}\bar{u}=\lambda |u|^{p+1}\in\R$, then one has
$$2{\rm Re}\int_{\R^3}\big(\mathcal{N}\nabla\bar{u}-\bar{u}\nabla\mathcal{N}\big)\cdot\nabla w\;dx=\lambda\frac{2(p-1)}{p+1}\int_{\R^d}|u|^{p+1}\Delta w\;dx.$$

$(ii)$ For  $\mathcal{N}=|u|^{p-1}u,\;V(x)=\frac{K}{|x|}$, and $w$ being radial, we have
\begin{align*}
\pa_{tt}I(t,w)=&-\int_{\R^3}|u|^2\Delta^2w\;dx+4{\rm Re}\int \pa_ju\pa_k\bar{u}\pa_j\pa_kw\\\nonumber
&-K\int_{\R^3}\frac{|u|^2}{|x|^2}\pa_r w\;dx
+\frac{2(p-1)}{p+1}\int_{\R^3}|u|^{p+1}\Delta w\;dx.
\end{align*}

\end{remark}

As a consequence, we obtain the following classical Morawetz estimate by taking $w(x)=|x|$.
\begin{lemma}[Classical Morawetz estimate]\label{lem:moraw}
Let $u:\;I\times\R^3\to\mathbb{C}$ solve \eqref{equ1.1} with $\lambda=1$. Then,
\begin{align}\label{equ:morident}
\frac{d}{dt}{\rm Im}\int_{\R^3}\bar{u}\frac{x}{|x|}\cdot\nabla u\;dx=&c|u(t,0)|^2+2\int_{\R^3}\frac{|\nabla_\theta u|^2}{|x|}\;dx\\\nonumber
&-\frac{K}{2}\int_{\R^3}\frac{|u|^2}{|x|^2}\;dx+\int_{\R^3}\frac{|u|^4}{|x|}\;dx.
\end{align}
Moreover, we have for $K<0$
\begin{equation}\label{equ:Morawe}
\int_I\int_{\R^3}\Big(\frac{|u|^2}{|x|^2}+\frac{|u|^4}{|x|}\Big)\;dx\;dt\leq C\sup_{t\in I}\|u(t,\cdot)\|_{\dot H^\frac12}^2.
\end{equation}
\end{lemma}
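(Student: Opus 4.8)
The plan is to deduce the identity \eqref{equ:morident} by feeding the radial weight $w(x)=|x|$ into the second-variation formula of Lemma \ref{lem:virial}, in the form recorded in Remark \ref{rem:moride}$(ii)$ with $V=K/|x|$ and $\mathcal{N}=|u|^{p-1}u$, and then to read off \eqref{equ:Morawe} by a sign/monotonicity argument once $K<0$. The first observation is that, by the first-variation formula \eqref{equ:firstd} with $\nabla w=x/|x|$, the left-hand side of \eqref{equ:morident} is exactly $\tfrac12\pa_{tt}I(t,|x|)$; so it suffices to compute the right-hand side of Remark \ref{rem:moride}$(ii)$ for $w=|x|$ and halve it.

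The core of Part 1 is therefore a short set of (distributional) computations for $w=|x|$ in $\R^3$: $\nabla w=x/|x|$ and hence $\pa_r w=1$; the Hessian $\pa_j\pa_k w=|x|^{-1}\big(\delta_{jk}-x_jx_k|x|^{-2}\big)$ is $|x|^{-1}$ times the orthogonal projection onto the directions tangent to the sphere, so that $\pa_j u\,\pa_k\bar u\,\pa_j\pa_k w=|x|^{-1}|\nabla_\theta u|^2$; next $\Delta w=2/|x|$; and, crucially, $\Delta^2 w=\Delta(2/|x|)=-8\pi\delta_0$, using the fundamental-solution identity $\Delta|x|^{-1}=-4\pi\delta_0$. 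Substituting these into Remark \ref{rem:moride}$(ii)$ and multiplying by $\tfrac12$ produces the four terms of \eqref{equ:morident}: the bi-Laplacian term yields $c|u(t,0)|^2$ with $c=4\pi$, the Hessian term yields $2\int|x|^{-1}|\nabla_\theta u|^2$, the potential term yields $-\tfrac{K}{2}\int|x|^{-2}|u|^2$, and the nonlinearity contributes $\tfrac{2(p-1)}{p+1}\int|x|^{-1}|u|^{p+1}$, which for the cubic power $p=3$ is exactly $\int|x|^{-1}|u|^4$.

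For Part 2 I would integrate \eqref{equ:morident} in time over $I$. When $K<0$ every term on the right is manifestly nonnegative --- in particular $-\tfrac{K}{2}\int|x|^{-2}|u|^2=\tfrac{|K|}{2}\int|x|^{-2}|u|^2\ge 0$ --- so the fundamental theorem of calculus gives
\[
\int_I\!\int_{\R^3}\Big(\tfrac{|K|}{2}\tfrac{|u|^2}{|x|^2}+\tfrac{|u|^4}{|x|}\Big)\,dx\,dt
\le 2\sup_{t\in I}\Big|{\rm Im}\int_{\R^3}\bar u\,\tfrac{x}{|x|}\cdot\nabla u\,dx\Big|,
\]
and \eqref{equ:Morawe} follows once the boundary term is controlled by $\sup_{t\in I}\|u(t)\|_{\dot H^{1/2}}^2$.

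The step I expect to be the main obstacle is precisely this last bound,
\[
\Big|{\rm Im}\int_{\R^3}\bar u\,\tfrac{x}{|x|}\cdot\nabla u\,dx\Big|\lesssim \|u\|_{\dot H^{1/2}}^2 .
\]
The naive estimate $\int|u||\nabla u|\lesssim\|u\|_{L^2}\|\nabla u\|_{L^2}$ is too lossy, so one must exploit the antisymmetric structure. Writing the Morawetz action as $\langle u,Au\rangle$ with $A=\tfrac{1}{2i}\big(\tfrac{x}{|x|}\cdot\nabla-(\tfrac{x}{|x|}\cdot\nabla)^{*}\big)$ self-adjoint, the claim is equivalent to the $L^2$-boundedness of $|\nabla|^{-1/2}A|\nabla|^{-1/2}$. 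Since $x/|x|$ is homogeneous of degree $0$ and smooth away from the origin, the principal part of $A$ has symbol $x\cdot\xi/|x|$ and the conjugated operator carries the bounded degree-zero symbol $x\cdot\xi/(|x||\xi|)$; its $L^2$-boundedness follows from Mikhlin/Calder\'on--Zygmund theory, while the lower-order term $|x|^{-1}$ is handled by Hardy's inequality (equivalently, multiplication by $x/|x|$ is bounded on $\dot H^{s}$ for $|s|<3/2$, and $1/|x|\colon\dot H^{1/2}\to\dot H^{-1/2}$). A secondary technical point, justified by the ``sufficiently smooth and decaying'' hypothesis of Lemma \ref{lem:virial} together with a standard density/approximation argument, is the appearance of the pointwise value $|u(t,0)|^2$ coming from the Dirac mass in $\Delta^2|x|$; since its coefficient $c$ is positive, this term only helps in Part 2 and may simply be discarded.
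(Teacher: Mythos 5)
Your proposal is correct and takes essentially the paper's own route: the paper obtains \eqref{equ:morident} precisely by inserting $w(x)=|x|$ into Lemma \ref{lem:virial} and Remark \ref{rem:moride}$(ii)$, and your distributional computations ($\pa_j\pa_k|x|$ equal to $|x|^{-1}$ times the tangential projection, $\Delta|x|=2/|x|$, $\Delta^2|x|=-8\pi\delta_0$), the halving via \eqref{equ:firstd}, the observation that the stated $\int |u|^4/|x|$ term is the $p=3$ instance of $\tfrac{2(p-1)}{p+1}\int|u|^{p+1}/|x|$, and the sign argument for $K<0$ all match. The boundary bound $\big|\Im\int\bar u\,\tfrac{x}{|x|}\cdot\nabla u\,dx\big|\lesssim\|u\|_{\dot H^{1/2}}^2$, which the paper leaves to standard arguments (cf.\ its appeal to \cite{CKSTT} for the interaction version), is correctly supplied by your observation that multiplication by $x/|x|$ is bounded on $\dot H^{\pm1/2}$ --- though the Mikhlin/Calder\'on--Zygmund phrasing is loose as stated, since $x\cdot\xi/(|x||\xi|)$ depends on both $x$ and $\xi$ and is not a Fourier multiplier.
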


Next, we establish the interaction Morawetz estimate for \eqref{equ1.1} with $K<0$ and $\lambda=1$ as the case that $K=0$ in \cite{CKSTT}.

\begin{theorem}[Interaction Morawetz estimate]\label{thm:intmorawet}
Let $u:\;I\times\R^3\to\C$ solve $i\pa_tu+\Delta u+\frac{K}{|x|}u=|u|^{p-1}u.$ Then, for $K<0$, we have
\begin{equation}\label{interac2}
\big\|u\big\|_{L_t^4(I;L_x^4(\R^3))}^2\leq
C\|u(t_0)\|_{L^2}\sup_{t\in I}\|u(t)\|_{\dot H^{\frac12}}.
\end{equation}

\end{theorem}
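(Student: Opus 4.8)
The plan is to adapt the two-particle interaction Morawetz argument of \cite{CKSTT} to the presence of the repulsive Coulomb potential, using the classical Morawetz bound \eqref{equ:Morawe} as the crucial extra ingredient that is unavailable when $K\geq0$. I introduce the interaction functional
$$M(t)=\int_{\R^3}\int_{\R^3}|u(t,y)|^2\,\frac{x-y}{|x-y|}\cdot{\rm Im}\big(\bar u\nabla u\big)(t,x)\;dx\,dy,$$
which is nothing but the single-particle Morawetz action of Lemma \ref{lem:virial} with the radial weight $w=|\,\cdot-y|$ centered at the second particle, averaged against the mass density $|u(t,y)|^2$. Equivalently, one may apply the virial identity \eqref{equ:secdd} to the tensor product $U(t,x,y)=u(t,x)u(t,y)$ on $\R^3\times\R^3$, which solves a Schr\"odinger equation with potential $V(x)+V(y)$, $V=\tfrac{K}{|x|}$, and weight $w(x,y)=|x-y|$. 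Throughout I work with sufficiently smooth decaying solutions as in Lemma \ref{lem:virial} and recover the general case at the end by a standard density argument.

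First I would compute $\tfrac{d}{dt}M(t)$ from \eqref{equ:secdd} together with the mass-current identity $\pa_t|u|^2=-2\nabla\cdot{\rm Im}(\bar u\nabla u)$. Four types of terms appear. \emph{(a)} From $-\int|U|^2\Delta^2 w$ one gets the decisive local term: in three dimensions $-\Delta^2|x-y|$ is a positive multiple of $\delta(x-y)$, producing $c\int_{\R^3}|u(t,x)|^4\,dx$ with $c>0$. \emph{(b)} The Hessian term $4\,{\rm Re}\int\pa U\,\pa\bar U\,\pa^2 w$ combines with the current--current contribution coming from $\pa_t|u(y)|^2$ into a manifestly nonnegative quantity, by convexity of $|x-y|$, exactly as in the potential-free case. \emph{(c)} For the defocusing nonlinearity $\mathcal N=|u|^{p-1}u$, Remark \ref{rem:moride}(i) gives the nonlinear contribution $\tfrac{2(p-1)}{p+1}\iint|u(x)|^{p+1}|u(y)|^2\,\Delta_x|x-y|\,dx\,dy\geq0$, since $\Delta_x|x-y|=2|x-y|^{-1}\geq0$. \emph{(d)} The genuinely new potential term is
$$\mathcal P(t)=\int_{\R^3}\int_{\R^3}|u(y)|^2|u(x)|^2\,\frac{(x-y)\cdot\nabla V(x)}{|x-y|}\,dx\,dy,\qquad \nabla V(x)=-K\frac{x}{|x|^3}$$
(together with its $x\leftrightarrow y$ symmetrization if one symmetrizes $M$). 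Integrating in time and discarding the favorable terms \emph{(b)}, \emph{(c)} yields $\int_I\int_{\R^3}|u|^4\,dx\,dt\lesssim \sup_{t\in I}|M(t)|+\int_I|\mathcal P(t)|\,dt$. The bound $\sup_{t\in I}|M(t)|\lesssim\|u\|_{L^2}^2\sup_{t\in I}\|u\|_{\dot H^{1/2}}^2$ is the standard estimate for the interaction functional (using $\big|\tfrac{x-y}{|x-y|}\big|\leq1$ and a Hardy--Sobolev/paraproduct bound on the momentum density, cf. \cite{CKSTT}), and by mass conservation this already matches the right side of \eqref{interac2} after taking a square root.

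The hard part is the term $\mathcal P(t)$. In the single-particle estimate of Lemma \ref{lem:moraw} the weight and the potential share the same center, so one gets the favorable $-K\int|u|^2|x|^{-2}\geq0$; here, however, the weight is centered at $y$ while $V$ is centered at the origin, and one checks that $(x-y)\cdot\big(\tfrac{x}{|x|^3}-\tfrac{y}{|y|^3}\big)$ has no definite sign, so $\mathcal P$ cannot simply be dropped. My resolution is to bound it crudely rather than by sign: from $|(x-y)\cdot x|\leq|x-y||x|$ one has
$$|\mathcal P(t)|\lesssim |K|\int_{\R^3}\int_{\R^3}|u(y)|^2|u(x)|^2\,\frac{1}{|x|^2}\,dx\,dy=|K|\,\|u\|_{L^2}^2\int_{\R^3}\frac{|u(x)|^2}{|x|^2}\,dx.$$
This is exactly where repulsivity $K<0$ becomes essential, because it is precisely the quantity controlled by the classical Morawetz estimate \eqref{equ:Morawe}:
$$\int_I|\mathcal P(t)|\,dt\lesssim |K|\,\|u\|_{L^2}^2\int_I\int_{\R^3}\frac{|u|^2}{|x|^2}\,dx\,dt\lesssim |K|\,\|u\|_{L^2}^2\sup_{t\in I}\|u(t)\|_{\dot H^{1/2}}^2.$$

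Collecting the three bounds gives $\|u\|_{L_{t,x}^4(I\times\R^3)}^4\lesssim\|u(t_0)\|_{L^2}^2\sup_{t\in I}\|u(t)\|_{\dot H^{1/2}}^2$, and \eqref{interac2} follows on taking square roots. I expect the only delicate routine points to be the positivity assertion in step \emph{(b)} and the sharp $\dot H^{1/2}$ control of $M(t)$, both of which are classical; the one genuinely new step, and the place where the sign of $K$ enters, is the coupling of the interaction estimate to the classical Morawetz estimate \eqref{equ:Morawe} in order to absorb $\mathcal P$.
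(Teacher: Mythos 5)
Your proposal is correct and follows essentially the same route as the paper: the CKSTT-style interaction Morawetz quantity with weight $h=|x|$, discarding the positive Hessian and defocusing terms, bounding the boundary term by $\|u\|_{L^2}^2\sup_t\|u\|_{\dot H^{1/2}}^2$, and—the key step—estimating the sign-indefinite potential cross term in absolute value by $|K|\,\|u\|_{L^2}^2\int|u|^2/|x|^2\,dx$ and absorbing it via the single-center Morawetz estimate \eqref{equ:Morawe}, which is exactly where $K<0$ enters in the paper's proof (its term $P_2$). Your observation that $(x-y)\cdot\big(\tfrac{x}{|x|^3}-\tfrac{y}{|y|^3}\big)$ has no definite sign, so the term must be bounded rather than dropped, matches the paper's treatment precisely.
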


\begin{proof}

We consider the NLS equation in the form of
\begin{equation}\label{NLS}
i\partial_tu+\Delta u=gu
\end{equation}
where $g=g(\rho,|x|)$ is a real function of $\rho=|u|^2=2T_{00}$ and
$|x|$.  We first recall the conservation laws for free Schr\"odinger
in Tao \cite{Tao}
\begin{equation*}
\begin{split}
\partial_t T_{00}+\partial_j T_{0j}=0,\\
\partial_t T_{0j}+\partial_k T_{jk}=0,
\end{split}
\end{equation*}
where the mass density quantity $T_{00}$ is defined by
$T_{00}=\tfrac12|u|^2,$ the mass current and the momentum density
quantity $T_{0j}=T_{j0}$ is given by $T_{0j}=T_{j0}=\mathrm{Im}(\bar
u\partial_j u)$, and the quantity $T_{jk}$ is
\begin{equation}\label{stress}
T_{jk}=2\mathrm{Re}(\partial_j u
\partial_k\bar u)-\tfrac12\delta_{jk}\Delta(|u|^2),
\end{equation}
for all $j,k=1,...n,$ and $\delta_{jk}$ is the Kroncker delta. Note
that the kinetic terms are unchanged, we see that for \eqref{NLS}
\begin{equation}\label{Local Conservation}
\begin{split}
\partial_t T_{00}+\partial_j T_{0j}&=0,\\
\partial_t T_{0j}+\partial_k T_{jk}&=-\rho\partial_j g.
\end{split}
\end{equation}
By the density argument, we may assume sufficient smoothness and
decay at infinity of the solutions to the calculation and in
particular to the integrations by parts. Let $h$ be a sufficiently
regular real even function defined in $\R^3$, e.g. $h=|x|$. The
starting point is the auxiliary quantity
\begin{equation*}
J=\tfrac12\langle|u|^2, h\ast |u|^2\rangle=2\langle T_{00}, h\ast
T_{00}\rangle.
\end{equation*}
Define the quadratic Morawetz quantity $M=\tfrac14\partial_t J$.
Hence we can precisely rewrite
\begin{equation}\label{3.1}
M=-\tfrac12\langle\partial_jT_{0j}, h\ast
T_{00}\rangle-\tfrac12\langle T_{00}, h\ast
\partial_jT_{0j} \rangle=-\langle T_{00}, \partial_j h\ast
T_{0j} \rangle.
\end{equation}
By \eqref{Local Conservation} and integration by parts, we have
\begin{equation*}
\begin{split}
\partial_tM&=\langle\partial_kT_{0k}, \partial_j h\ast T_{0j} \rangle-\langle T_{00},
\partial_j h\ast\partial_t T_{0j} \rangle\\&=-\sum_{j,k=1}^n\langle T_{0j}, \partial_{jk} h\ast T_{0j} \rangle+\langle T_{00},
\partial_{jk} h\ast T_{jk} \rangle+\langle \rho,
\partial_j h\ast(\rho\partial_j g) \rangle.
\end{split}
\end{equation*}
For our purpose, we note that
\begin{equation}
\begin{split}
\sum_{j,k=1}^n\langle T_{0k},  \partial_{jk} h\ast T_{0j}
\rangle&=\big\langle \mathrm{Im}(\bar u\nabla u), \nabla^2 h\ast
\mathrm{Im}(\bar u\nabla u) \big\rangle\\&=\big\langle \bar u\nabla
u, \nabla^2 h\ast \bar u\nabla u \rangle-\langle \mathrm{Re}(\bar
u\nabla u), \nabla^2 h\ast \mathrm{Re}(\bar u\nabla u) \big\rangle.
\end{split}
\end{equation}
Therefore it yields that
\begin{equation*}
\begin{split}
\partial_tM=&\big\langle \mathrm{Re}(\bar u\nabla
u), \nabla^2 h\ast \mathrm{Re}(\bar u\nabla u)
\big\rangle-\big\langle \bar u\nabla u, \nabla^2 h\ast \bar u\nabla
u \big\rangle\\&+\Big\langle \bar uu,
\partial_{jk} h\ast \big(\mathrm{Re}(\partial_j u \partial_k\bar
u)-\tfrac14\delta_{jk}\Delta(|u|^2)\big) \Big\rangle+\big\langle
\rho,
\partial_j h\ast(\rho\partial_j g) \big\rangle.
\end{split}
\end{equation*}
From the observation
\begin{equation*}
\begin{split}
-\big\langle \bar uu,
\partial_{jk} h\ast\delta_{jk}\Delta(|u|^2) \big\rangle=\big\langle \nabla (|u|^2), \Delta h\ast
\nabla(|u|^2) \big\rangle,
\end{split}
\end{equation*}
we write
\begin{equation}\label{Morawetz equality}
\begin{split}
\partial_tM=\tfrac12\langle \nabla \rho, \Delta h\ast\nabla\rho \rangle+R+\big\langle \rho,
\partial_j h\ast(\rho\partial_j g) \big\rangle,
\end{split}
\end{equation}
where $R$ is given by
\begin{equation*}\label{3.4}
\begin{split}
R&=\big\langle \bar uu, \nabla^2 h\ast (\nabla\bar u \nabla u)
\big\rangle-\big\langle \bar u\nabla u, \nabla^2 h\ast \bar u\nabla
u \big\rangle\\&=\tfrac12\int \Big(\bar u(x)\nabla \bar u(y)-\bar
u(y)\nabla\bar u(x)\Big)\nabla^2h(x-y)\Big(u(x)\nabla
u(y)-u(y)\nabla u(x)\Big)\mathrm{d}x\mathrm{d}y.
\end{split}
\end{equation*}

Since the Hessian of $h$ is positive definite, we have $R\geq0$. Integrating
over time in an interval $[t_1, t_2]\subset I$ yields
\begin{equation*}
\begin{split}
\int_{t_1}^{t_2}\Big\{\frac12\langle \nabla \rho, \Delta
h\ast\nabla\rho \rangle+\langle \rho,
\partial_j h\ast(\rho\partial_j g) \rangle+R\Big\}\mathrm{d}t=-\langle T_{00}, \partial_j h\ast
T_{0j} \rangle\big|_{t=t_1}^{t=t_2}.
\end{split}
\end{equation*}

From now on, we choose $h(x)=|x|$. One can follow the arguments in
\cite{CKSTT} to bound the right hand by the quantity
\begin{equation*}
\Big|\mathrm{Im}\iint_{\R^{3}\times\R^3}|u(x)|^2\frac{x-y}{|x-y|}\bar
u(y)\nabla u(y)dxdy\Big|\leq C\sup_{t\in
I}\|u(t)\|^2_{L^2}\|u(t)\|^2_{\dot H^{\frac12}}.
\end{equation*}
Therefore we conclude
\begin{equation}\label{Morawetz inequality}
\int_{t_1}^{t_2}\big\langle \rho,
\partial_j h\ast(\rho\partial_j g) \big\rangle dt+\big\|u\big\|_{L^4(I;L^4(\R^3))}^2\leq C\sup_{t\in
I}\|u(t)\|_{L^2}\|u(t)\|_{\dot H^{\frac12}}.
\end{equation}

Now we consider the term
\begin{equation*}
\begin{split}
P&:=\big\langle \rho, \nabla h\ast (\rho\nabla g) \big\rangle.
\end{split}
\end{equation*}
Consider $g(\rho,|x|)=\rho^{(p-1)/2}+V(x)$, then we can write
$P=P_1+P_2$ where
\begin{equation}\label{3.5}
\begin{split}
P_1= \big\langle\rho, \nabla h\ast \big(\rho\nabla
(\rho^{(p-1)/2})\big)\big\rangle=\frac{p-1}{p+1}\big\langle\rho,
\Delta h\ast \rho^{(p+1)/2}\big\rangle\geq 0
\end{split}
\end{equation}
and
\begin{equation}\label{P2}
\begin{split}
P_2=& \iint\rho(x)\nabla h(x-y)\rho(y)\nabla
\big(V(y)\big)\mathrm{d}x\mathrm{d}y\\
=&\iint|u(x)|^2\frac{(x-y)\cdot y}{|x-y|\cdot|y|^3}|u(y)|^2\;dx\;dy.
\end{split}
\end{equation}
By using the Morawetz estimate \eqref{equ:Morawe}
$$\int_I\int_{\R^3}\frac{|u|^2}{|x|^2}\;dx\;dt\leq C\sup_{t\in I}\|u\|_{\dot{H}^\frac12}^2,$$
one has
$$|P_2|\leq \|u_0\|_{L^2}^2\sup_{t\in I}\|u\|_{\dot{H}^\frac12}^2.$$
And so, we conclude the proof of Theorem \ref{thm:intmorawet}.

\end{proof}

\begin{remark}\label{rem:intmor}
By the same argument as above, one can extend the Coulomb potential $V(x)=\frac{K}{|x|}$ to $V(x)$ satisfies the following argument: first, we have by Morawetz estimate
$$\int_I\int_{\R^3}|u|^2\frac{x}{|x|}\cdot\nabla V\;dx\;dt\leq C\sup_{t\in I}\|u\|_{\dot{H}^\frac12}^2.$$
As in \eqref{P2}, we are reduced to estimate the term
$$\int_I\int_{\R^3}|u|^2|\nabla V|\;dx\;dt.$$
Therefore, we can extend  $V(x)$ satisfying
$$\frac{x}{|x|}\cdot \nabla V\geq c|\nabla V|,$$
with some positive constant $c$.

\end{remark}

\subsection{Scattering theory}

 Now we use the global-in-time
interaction Morawetz estimate \eqref{interac2}
\begin{equation}\label{rMorawetz}
\big\|u\big\|_{L_t^4(\R;L_x^4(\R^3))}^2\leq
C\|u_0\|_{L^2}\sup_{t\in \R}\|u(t)\|_{\dot H^{\frac12}},
\end{equation}
to prove the scattering theory part of Theorem \ref{thm:scattering}. Since the construction of the wave operator is
standard, we only show the asymptotic completeness.\vspace{0.2cm}

Let $u$ be a global solution to \eqref{equ1.1}. Let $\eta>0$ be a small constant to be chosen later and split
$\R$ into $L=L(\|u_0\|_{H^1})$ finite subintervals $I_j=[t_j,t_{j+1}]$ such
that
\begin{equation}\label{equ4.16}
\|u\|_{L_{t,x}^{4}(I_j\times\R^3)}\leq\eta.
\end{equation}

 Define
$$\big\|\langle\nabla\rangle u\big\|_{S^0(I)}:=\sup_{(q,r)\in\Lambda_0:r\in[2,3_-]}\big\|\langle\nabla\rangle u\big\|_{L_t^qL_x^r(I\times\R^3)}.$$
Using the Strichartz estimate and Sobolev norm equivalence \eqref{equ:sobequi}, we obtain
\begin{align}\label{star}
\big\|\langle\nabla\rangle
u\big\|_{S^0(I_j)}\lesssim&\|u(t_j)\|_{H^1}+\big\|\langle\nabla\rangle(|u|^{p-1}u)
\big\|_{L_t^2L_x^\frac{6}{5}(I_j\times\R^3)}.
\end{align}
Let $\epsilon>0$ to be determined later, and
$r_\epsilon=\frac{6}{3-(4/(2+\epsilon))}$. On the other hand, we
use the Leibniz rule and H\"older's inequality to obtain
\begin{align*}
\big\|\langle\nabla\rangle(|u|^{p-1}u)
\big\|_{L_t^2L_x^\frac65}\lesssim&\big\|\langle\nabla\rangle
u\big\|_{L_t^{2+\epsilon}(I_j;L_x^{r_\epsilon})}
\|u\|^{p-1}_{L_t^{\frac{2(p-1)(2+\epsilon)}{\epsilon}}L_x^\frac{3(p-1)(2+\epsilon)}{4+\epsilon}}.\end{align*}

Taking $\epsilon=2_+$, and so $r_\epsilon=3_-$. If
$p\in(\frac73,4]$, then ${2(p-1)(2+\epsilon)}/{\epsilon}>4$ and
$2\leq \frac{3(p-1)(2+\epsilon)}{4+\epsilon}\leq 6$.
 Therefore we use interpolation to
obtain
\begin{align*}
\|u\|_{L_t^{\frac{2(p-1)(2+\epsilon)}{\epsilon}}L_x^\frac{3(p-1)(2+\epsilon)}{4+\epsilon}}\leq C\|u\|^\alpha_{L_{t,x}^{4}(I_j\times\R^3)}\|u\|^\beta_{L_t^{\infty}L_x^6(I_j\times\R^3)}\|u\|^\gamma_{L_t^{\infty}L_x^2(I_j\times\R^3)},
\end{align*}
where $\alpha>0,\beta,\gamma\geq 0$ satisfy $\alpha+\beta+\gamma=1$ and
\begin{align*}
\begin{cases}
\frac{\epsilon}{2(p-1)(2+\epsilon)}&=\frac{\alpha}{4}+\frac{\beta}{\infty}+\frac{\gamma}{\infty},\\
\frac{4+\epsilon}{3(p-1)(2+\epsilon)}&=\frac{\alpha}{4}
+\frac{\beta}{6}+\frac{\gamma}2.
\end{cases}
\end{align*}
Hence
\begin{align*}
\big\|\langle\nabla\rangle(|u|^{p-1}u)
\big\|_{L_t^2L_x^\frac{2n}{n+2}}\lesssim&\big\|\langle\nabla\rangle
u\big\|_{L_t^{2+\epsilon}(I_j;L_x^{r_\epsilon})}\|u\|^{\alpha(p-1)}_{L_{t,x}^{4}(\R\times\R^3)}
\|u\|^{(\beta+\gamma)(p-1)}_{L_t^\infty H^1_x(I_j\times\R^3)}\\\leq&
C\eta^{\alpha(p-1)}\big\|\langle\nabla\rangle
u\big\|_{S^0(I_j)}.\end{align*} Plugging this into \eqref{star} and
noting that $\alpha(p-1)>0$, we can choose $\eta$ to be small enough
such that
\begin{align*}
\big\|\langle\nabla\rangle
u\big\|_{S^0(I_j)}\leq C(E,M,\eta).\end{align*}
Hence we have by the finiteness of $L$
\begin{align}\label{boundnorm}
\big\|\langle\nabla\rangle
u\big\|_{S^0(\R)}\leq C(E,M,\eta,L).\end{align}

 If
$p\in (4,5)$, we use interpolation to show that
\begin{align*}
\|u\|_{L_t^{\frac{2(p-1)(2+\epsilon)}{\epsilon}}L_x^\frac{3(p-1)(2+\epsilon)}{4+\epsilon}}\leq C\|u\|^\alpha_{L_{t,x}^{4}(I_j\times\R^3)}\|u\|^\beta_{L_t^{\infty}L_x^6(I_j\times\R^3)}\|u\|^\gamma_{L_t^{6}L_x^{18}(I_j\times\R^3)},\end{align*}
where  $\alpha>0,\beta,\gamma\geq 0$ satisfy $\alpha+\beta+\gamma=1$ and
\begin{align*}
\begin{cases}
\frac{\epsilon}{2(p-1)(2+\epsilon)}&=\frac{\alpha}{4}+\frac{\beta}{\infty}+\frac{\gamma}{6},\\
\frac{4+\epsilon}{3(p-1)(2+\epsilon)}&=\frac{\alpha}{4}
+\frac{\beta}{6}+\frac{\gamma}{18}.
\end{cases}
\end{align*} It is easy to solve
these equations for $p\in(4,5)$. Since $r_\epsilon\in[2,3_-]$ for
$\epsilon=2_+$, we have
\begin{align*}
\big\|\langle\nabla\rangle(|u|^{p-1}u)
\big\|_{L_t^2L_x^\frac{6}{5}}\lesssim&\big\|\langle\nabla\rangle
u\big\|_{L_t^{2+\epsilon}(I_j;L_x^{r_\epsilon})}\|u\|^{\alpha(p-1)}_{L_{t,x}^4(\R\times\R^3)}\|u\|^{\beta(p-1)}_{L_t^\infty
H^1_x(I_j\times\R^3)}\|\langle\nabla\rangle u\|^{\gamma(p-1)}_{L_t^{6}L_x^{\frac{18}{7}}(I_j\times\R^3)}\\&\leq C\eta^{\alpha(p-1)}\big\|\langle\nabla\rangle
u\big\|^{1+\gamma(p-1)}_{S^0(I_j)}.\end{align*}
Hence arguing as above we obtain \eqref{boundnorm}.

Finally, we utilize \eqref{boundnorm} to show asymptotic completeness. We need to prove that there exist unique $u_\pm$ such that
$$\lim_{t\to\pm\infty}\|u(t)-e^{it\lk}u_\pm\|_{H^1_x}=0.$$
By time reversal symmetry, it suffices to prove this for positive
times. For $t>0$, we will show that $v(t):=e^{-it\lk}u(t)$ converges
in $H^1_x$ as $t\to+\infty$, and denote $u_+$ to be the limit. In
fact, we obtain by Duhamel's formula
\begin{equation}\label{equ4.21}
v(t)=u_0-i\int_0^te^{-i\tau \lk}(|u|^{p-1}u)(\tau)d\tau.
\end{equation}
Hence, for $0<t_1<t_2$, we have
$$v(t_2)-v(t_1)=-i\int_{t_1}^{t_2}e^{-i\tau \lk}(|u|^{p-1}u)(\tau)d\tau.$$
Arguing as before, we deduce that for some $\alpha>0,\beta\geq1$
\begin{align*}
\|v(t_2)-v(t_1)\|_{H^1(\R^3)}=&\Big\|\int_{t_1}^{t_2}e^{-i\tau \lk}(|u|^{p-1}u)(\tau)d\tau\Big\|_{H^1(\R^3)}\\
\lesssim&\big\|\langle\nabla\rangle(|u|^{p-1}u)
\big\|_{L_t^2L_x^\frac65([t_1,t_2]\times\R^3)}\\
\lesssim&\|u\|_{L_{t,x}^{4}([t_1,t_2]\times\R^3)}^{\alpha(p-1)}\big\|\langle\nabla\rangle
u\big\|^\beta_{S^0([t_1,t_2])}
\\
\to&0\quad \text{as}\quad t_1,~t_2\to+\infty.
\end{align*}
As $t$ tends to $+\infty$, the limitation of \eqref{equ4.21} is well
defined. In particular, we find the asymptotic state
$$u_+=u_0-i\int_0^\infty e^{-i\tau \lk}(|u|^{p-1}u)(\tau)d\tau.$$
Therefore, we conclude the proof of Theorem \ref{thm:scattering}.

%%%%%%%%%%%%%%%%%%%%%%%%%%%%%%%%%%%%%%%%%%%%

%%%%%%%%%%%%%%%%%%%%%%%%%%%%%%%%%%%%%%%%%%%%

\section{Blow up}

In this section, we study the blow up behavior of the solution in the focusing case, i.e $\lambda=-1$. In the case that $K>0$, we will use the sharp Hardy's inequality and Young's inequality to obtain
\begin{align}\nonumber
 \int_{\R^3}\frac{|u|^2}{|x|}\;dx\leq &\big(\int|u|^2\;dx\big)^\frac12\Big(\int\frac{|u|^2}{|x|^2}\;dx\Big)^\frac12 \\\nonumber
  \leq & 2\|u\|_{L^2}\|u\|_{\dot{H}^1}\\\label{equ:kpos}
  \leq&\frac{1}{C_{p,K}}\|u\|_{L^2}^2+C_{p,K}\|u\|^2_{\dot{H}^1},
\end{align}
for any $C_{p,K}>0$.

From Remark \ref{rem:moride}, it follows that
for $w$  radial function, we have
\begin{align}\label{equ:virit}
\pa_{tt}\int_{\R^3}w(x)|u|^2\;dx=&-\int_{\R^3}|u|^2\Delta^2w\;dx+4{\rm Re}\int_{\R^3} \pa_ju\pa_k\bar{u}\pa_j\pa_kw\\\nonumber
&-K\int_{\R^3}\frac{|u|^2}{|x|^2}\pa_r w\;dx
-\frac{2(p-1)}{p+1}\int_{\R^3}|u|^{p+1}\Delta w\;dx.
\end{align}

{\bf Case 1: $u_0\in\Sigma.$}
By taking $w(x)=|x|^2$, we obtain
\begin{corollary}\label{cor:virial}
Let $u$ solve \eqref{equ1.1}, then we have
\begin{align}\nonumber
\frac{d^2}{dt^2}\int_{\R^3}|x|^2|u(t,x)|^2\;dx=&8\int_{\R^3}|\nabla u|^2\;dx-2K\int_{\R^3}\frac{|u|^2}{|x|}\;dx-\frac{12(p-1)}{p+1}\int_{\R^3}|u|^{p+1}\;dx\\\nonumber
=&12(p-1)E(u)-2(3p-7)\int_{\R^3}|\nabla u|^2\;dx+6K\int_{\R^3}\frac{|u|^2}{|x|}\;dx.
\end{align}
\end{corollary}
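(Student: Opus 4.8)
The plan is to specialize the general virial identity \eqref{equ:virit} (which is a consequence of Lemma \ref{lem:virial} together with Remark \ref{rem:moride}) to the quadratic weight $w(x)=|x|^2$, and then to trade the nonlinear term against the conserved energy $E(u)$ to reach the second form.

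First I would record the derivatives of $w(x)=|x|^2$ on $\R^3$: $\nabla w=2x$, $\pa_j\pa_k w=2\delta_{jk}$, $\Delta w=6$, $\Delta^2 w=0$, and $\pa_r w=2|x|$. Feeding these into \eqref{equ:virit} collapses the four terms one by one. The biharmonic term drops out since $\Delta^2 w=0$; the Hessian term $4\,\mathrm{Re}\int\pa_j u\,\pa_k\bar u\,\pa_j\pa_k w$ becomes $8\int|\nabla u|^2$ because $\pa_j\pa_k w=2\delta_{jk}$ simply contracts the two gradients into $|\nabla u|^2$; the potential term $-K\int\frac{|u|^2}{|x|^2}\pa_r w$ becomes $-2K\int\frac{|u|^2}{|x|}$ after inserting $\pa_r w=2|x|$; and the nonlinear term $-\frac{2(p-1)}{p+1}\int|u|^{p+1}\Delta w$ becomes $-\frac{12(p-1)}{p+1}\int|u|^{p+1}$ since $\Delta w=6$. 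This reproduces the first displayed identity directly.

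To obtain the second form I would eliminate $\int|u|^{p+1}$ using the energy \eqref{equ:energy}. In the focusing case $\lambda=-1$ one has $\frac1{p+1}\int|u|^{p+1}=\frac12\int|\nabla u|^2-\frac{K}{2}\int\frac{|u|^2}{|x|}-E(u)$; solving for $\int|u|^{p+1}$, substituting into the first identity, and collecting the $\int|\nabla u|^2$ and $\int\frac{|u|^2}{|x|}$ contributions is then a purely algebraic rearrangement that produces the $12(p-1)E(u)$ term, the factor $-2(3p-7)$ in front of $\int|\nabla u|^2$, and the residual Coulomb term. Since $E(u)=E(u_0)$ is conserved, this rewriting is exactly the form convenient for the subsequent Virial/blow-up analysis, where the $\int|\nabla u|^2$ coefficient $-2(3p-7)$ is negative in the range $\frac43<p-1$.

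The genuinely delicate point is not this algebra but the legitimacy of the computation for the data class at hand. The quantity $\int|x|^2|u(t)|^2\,dx$ is finite and twice continuously differentiable in $t$ only for finite-variance data, which is precisely the hypothesis $u_0\in\Sigma$ of Case $1$; moreover the integrations by parts underlying \eqref{equ:secdd}, hence \eqref{equ:virit}, require adequate spatial decay of $u(t)$. I would therefore first carry out the computation for Schwartz data, or equivalently after replacing $|x|^2$ by a bounded weight $w_R$ agreeing with $|x|^2$ on $|x|\le R$, establish the identity there, and then pass to the limit using mass conservation together with the propagation of the weighted bound $\big\||x|u(t)\big\|_{L^2}<\infty$ coming from the local theory on $\Sigma$. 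This truncation-and-limit step, rather than the identity itself, is where the real care is needed.
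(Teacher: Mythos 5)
Your route coincides with the paper's: the paper's entire proof of this corollary is the phrase ``By taking $w(x)=|x|^2$'' applied to \eqref{equ:virit}, and your computation of the weight derivatives ($\pa_j\pa_k w=2\delta_{jk}$, $\Delta w=6$, $\Delta^2w=0$, $\pa_r w=2|x|$) correctly reproduces the first displayed identity. Your closing remark about truncating $|x|^2$ and passing to the limit for $u_0\in\Sigma$ is a legitimate refinement that the paper leaves entirely implicit, and it is the right thing to worry about for the first line.

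There is, however, a concrete point you left unresolved, hidden in your phrase ``the residual Coulomb term.'' Carrying out the substitution you describe, namely $\tfrac{1}{p+1}\int|u|^{p+1}\,dx=\tfrac12\int|\nabla u|^2\,dx-\tfrac{K}{2}\int\tfrac{|u|^2}{|x|}\,dx-E(u)$ from \eqref{equ:energy} with $\lambda=-1$, turns the first line into
\begin{equation*}
12(p-1)E(u)-2(3p-7)\int_{\R^3}|\nabla u|^2\,dx+2(3p-4)K\int_{\R^3}\frac{|u|^2}{|x|}\,dx,
\end{equation*}
since the Coulomb coefficient is $-2K+6(p-1)K=2(3p-4)K$. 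This agrees with the printed coefficient $+6K$ only when $p=\tfrac73$; for the range $\tfrac73<p\le 5$ of Theorem \ref{thm:blowup} the second equality of the corollary as stated is a misprint, and a ``purely algebraic rearrangement'' cannot produce it. So you should compute the coefficient explicitly rather than gesture at it, and note the (harmless) downstream correction: for $K\le0$ the term $2(3p-4)K\int\tfrac{|u|^2}{|x|}\,dx$ is nonpositive and can be discarded, exactly as with $6K$; for $K>0$ one applies \eqref{equ:kpos} with $C_{p,K}=\tfrac{3p-7}{(3p-4)K}$ instead of $\tfrac{3p-7}{3K}$, which replaces the constant $\tfrac{18K^2}{3p-7}$ in \eqref{equ:y''t} by $\tfrac{2(3p-4)^2K^2}{3p-7}$ and correspondingly modifies $C(E(u_0),M(u_0))$ in \eqref{equ:cem}, leaving the blow-up conclusion qualitatively unchanged.
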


Let $I=[0,T]$ be the maximal interval of existence.
Let
$$y(t):=\int|x|^2|u(t,x)|^2\;dx,$$ then for $t\in I$
$$y'(t)=4{\rm Im}\int_{\R^3}x\cdot\nabla u\bar{u}\;dx.$$
By Corollary \ref{cor:virial} and \eqref{equ:kpos} with $C_{p,K}=\frac{3p-7}{3K}$ when $K>0$, we get
\begin{equation}\label{equ:y''t}
  y''(t)\leq 12(p-1)C(E,M):= \begin{cases}
  12(p-1)E(u_0) \quad\text{if}\quad K\leq0\\
  12(p-1)E(u_0)+\frac{18K^2}{3p-7}M(u_0)\quad\text{if}\quad K>0.
  \end{cases}
\end{equation}
Hence
$$y(t)\leq 6(p-1)C(E,M)t^2+y'(0)t+y(0).$$
which implies $I$ is finite provided that
$$(i) C(E,M)<0; (ii) C(E,M)=0, y'(0)<0;  (iii) C(E,M)>0, y'(0)^2\geq 24(p-1)C(E,M)y(0). $$
In fact, in the above conditions, we have $T<+\infty$ and
$$\lim_{t\to T}y(t)=0$$
this together with
$$\|u_0\|_{L_x^2}^2=\|u(t)\|_{L^2}^2\leq\big\||x|u(t)\big\|_{L^2}\|u(t)\|_{\dot H^1}.$$
implies
\begin{equation}\label{equ:blup}
\lim_{t\to T}\|u(t)\|_{\dot{H}^1}=+\infty.
\end{equation}

{\bf Case 2: $u_0\in H^1_{\rm rad}(\R^3)$.}
Let $\phi$ be a smooth, radial function satisfying $|\pa^2_r\phi(r)|\leq 2$, $\phi(r)=r^2 $ for $r\leq 1$, and $\phi(r)=0$ for $r\geq3$.
For $R\geq1 $, we define $$\phi_R(x)=R^2\phi\big(\tfrac{|x|}{R}\big)\;\text{ and }\; V_R(x)=\int_{\R^3} \phi_R(x)|u(t,x)|^2dx.$$
Let $u(t,x)$ be a radial solution to \eqref{equ1.1}, then by a direct computation,  we have by \eqref{equ:virit}
\begin{equation}\label{equ:vr}
  \partial_t V_R(x)=2 {\rm Im} \int_{\R^3}[\overline{u}\pa_j u](t,x)\pa_j\phi_R(x)] dx,
\end{equation}
 and
\begin{align*}
\pa^2_t V_R(x)=&4{\rm Re}\int_{\R^3} \pa_ju\pa_k\bar{u}\pa_j\pa_k\phi_R-\int_{\R^3}|u|^2\Delta^2\phi_R\;dx\\\nonumber
&-K\int_{\R^3}\frac{|u|^2}{|x|^2} \phi_R'\;dx
-\frac{2(p-1)}{p+1}\int_{\R^3}|u|^{p+1}\Delta \phi_R\;dx\\
= & 4\int_{\R^3} \phi_R''|\nabla u|^2dx-K\int_{\R^3}\frac{|u|^2}{|x|^2} \phi_R'\;dx-\int_{\R^3}\left[
\Delta^2\phi_R |u(t,x)|^2
 +\frac{2(p-1)}{p+1}\Delta\phi_R(x)|u|^{p+1}(t,x)\right]
 dx\\
 =&8\int_{\R^3}|\nabla u|^2\;dx-2K\int_{\R^3}\frac{|u|^2}{|x|}\;dx-\frac{12(p-1)}{p+1}\int_{\R^3}|u|^{p+1}\;dx-\int_{\R^3}
\Delta^2\phi_R |u(t,x)|^2\;dx\\
 &-4\int_{\R^3}|\nabla u|^2(2-\phi_R'')\;dx+K\int_{\R^3}\frac{|u|^2}{|x|}(2-\phi_R')\;dx+\frac{2(p-1)}{p+1}\int_{\R^3}|u|^{p+1}(6-\Delta\phi_R)\;dx\\
 \leq&12(p-1)E(u)-2(3p-7)\|\nabla u\|_{L^2}^2+6K\int_{\R^3}\frac{|u|^2}{|x|}\;dx\\
 &-4\int_{\R^3}|\nabla u|^2(2-\phi_R'')\;dx+C\int_{|x|\geq R}\big(\tfrac{|u|^2}{R}+|u|^{p+1}\big)\;dx.
\end{align*}
By the radial Sobolev inequality, we have
\begin{align*}
\|f\|_{L^\infty(|x|\geq
R)}\leq&\frac{c}{R}\|f\|_{L^2_x(|x|\geq R)}^\frac12\|\nabla
f\|_{L^2_x(|x|\geq R)}^\frac12.
\end{align*}
Therefore, by mass conservation and Young's inequality, we know that
for any $\epsilon>0$ there exist sufficiently large $R$ such that for $K\leq0$
\begin{align*}
\pa_t^2V(t)\leq&12(p-1)E(u)-2(3p-7-\epsilon)\|u\|_{\dot{H}^1}^2
+ \epsilon^2\\
\leq&12(p-1)E(u_0)+ \epsilon^2,
\end{align*}
and for $K>0$ by using \eqref{equ:kpos} with $C_{p,K}=\frac{3p-7-\delta}{3K}$ and $0<\delta\ll1$
\begin{align*}
\pa_t^2V(t)\leq&12(p-1)E(u)-(\delta-\epsilon)\|u\|_{\dot{H}^1}^2+\frac{18K^2}{3p-7-\delta}M(u)
+ \epsilon^2\\
\leq&12(p-1)E(u_0)+\frac{18K^2}{3p-7-\delta}M(u_0)+ \epsilon^2,
\end{align*}
for any $3p-7>\delta>\epsilon>0$.

 Finally, if we choose $\epsilon$
sufficient small, we can obtain
\begin{equation}\label{equ:vrt}
  \pa_t^2V_R(t)\leq \begin{cases}
                      6(p-1)E(u_0), & \mbox{if } K\leq0 \\
                      6(p-1)E(u_0)+\frac{9K^2}{3p-7-\delta}M(u_0), & \mbox{if } K>0,
                    \end{cases}
\end{equation}
which implies that $u$ blows up in finite time by the same argument as Case 1, since for the case $K>0$, the assumption
$$E(u_0)+\frac{3K^2}{2(3p-7)(p-1)}M(u_0)<0,$$
shows that there exists $0<\delta\ll1$ such that
$$6(p-1)E(u_0)+\frac{9K^2}{3p-7-\delta}M(u_0)<0.$$

%%%%%%%%%%%%%%%%%%%%%%%%%%%%%%%%%%%%%%%%%%%%%%%%%%%%%%%%%%%%%%%%%%%%%%%%%%%%%%%%%%%%%%%%%%%%%%%%%%%%%%%%%

\begin{center}

\end{center}

\end{document}